\documentclass[11pt]{article}
\usepackage{latexsym,amsmath,url,caption2,epsfig}
\usepackage{textcomp}
\usepackage{amsfonts,euscript}
\usepackage{graphicx}
\usepackage{amsmath}
\usepackage{amssymb}
\usepackage{amsfonts}
\usepackage{amsthm}
\usepackage{mathrsfs}
\usepackage[all]{xy}
\usepackage{epstopdf}
\usepackage{subfigure}
\usepackage{rotating}
\usepackage{appendix}
\usepackage{multirow,rotating}
\usepackage{url}
\usepackage{subfigure}
\usepackage{setspace}
\usepackage{listings}
\usepackage{color}
\usepackage{tikz}
\usepackage{graphics}
\usepackage[breaklinks=true]{hyperref}
\usepackage[english]{babel}
\usepackage{datetime}
\usepackage{subfigure}
\usepackage{booktabs}
\usepackage{rotfloat}
\usepackage[numbers]{natbib}
\usepackage{algorithm}
\usepackage{algorithmicx}
\usepackage{algpseudocode}
\usepackage{framed}

\setcounter{MaxMatrixCols}{10}

\graphicspath{{pic/}}
\textwidth=6.6in
\textheight=8.9in
\headheight=0.0in
\oddsidemargin=0.0in
\headsep=0.0in
\topmargin=0.0in
\newtheorem{theorem}{Theorem}

\newtheorem{lemma}{Lemma}
\newtheorem{proposition}{Proposition}

\theoremstyle{definition}

\algdef{SE}[DOWHILE]{Do}{doWhile}{\algorithmicdo}[1]{\algorithmicwhile\ #1}

\allowdisplaybreaks

\begin{document}

\title{Optimal Uncertainty Size in Distributionally Robust Inverse
Covariance Estimation}
\date{ This Version: \today }
\author{Jose Blanchet \and Nian Si}
\date{\emph{Management Science and Engineering, Stanford University}}
\maketitle

\begin{abstract}
In a recent paper, Nguyen, Kuhn, and Esfahani (2018) built a
distributionally robust estimator for the precision matrix of the Gaussian
distribution. The distributional uncertainty size is a key ingredient in the
construction of this estimator. We develop a statistical theory which shows
how to optimally choose the uncertainty size to minimize the associated
Stein loss. Surprisingly, rather than the expected canonical square-root
scaling rate, the optimal uncertainty size scales linearly with the sample
size.
\end{abstract}

\section{Introduction}

Motivated by a wide range of problems which require the estimation of the
inverse of a covariance matrix, \cite{nguyen2018distributionally} recently
constructed an estimator based on distributionally robust optimization using
the Wasserstein distance in Euclidean space. A crucial ingredient is the
distributional uncertainty size, which plays the role of a regularization
parameter.

In their paper, \cite{nguyen2018distributionally} show excellent empirical
performance of their estimator in comparison to several commonly used
estimators (based on shrinkage and regularization). The comparison is based
in terms of the corresponding Stein loss (defined in terms of the
likelihood, as we shall review). However, no theory is provided as how to
choose the distributional uncertainty size.

Our goal is to provide an asymptotically optimal expression for the
distributional uncertainty size, in terms of the Stein loss performance, as
the sample size increases.

This paper provides interesting insights which validate the empirical
observations in \cite{nguyen2018distributionally}. In particular, in the
Introduction of \cite{nguyen2018distributionally}, leading to equation (4),
they argue that the distributional uncertainty size, $\rho _{n}$, should
scale at rate $\rho _{n}=O\left( n^{-1/2}\right) $ (where $n$ is the sample
size) due to the existence of a central limit theorem for the Wasserstein
distance for Gaussian distributions. However, the numerical experiments,
reported in Section 6.1 of \cite{nguyen2018distributionally}, suggest an
optimal scaling of the form $\rho _{n}=O\left( n^{-\kappa }\right) $ where $%
\kappa >1/2$.

Our main result shows that the asymptotically optimal choice of
distributional uncertainty is of the form $\rho _{n}=$ $\rho _{\ast
}n^{-1}\left( 1+o\left( 1\right) \right) $ as $n\rightarrow \infty $, where $%
\rho _{\ast }>0$ is a constant which is characterized explicitly. Our
results therefore validate the empirical findings of \cite%
{nguyen2018distributionally} with $\kappa =1$.

This paper is organized as follows. We review the estimator of \cite%
{nguyen2018distributionally} and state our main result in Section \ref%
{Section_Main}. We then provide the proof of our result in Section \ref%
{Proof_of_Thm_1}. Numerical experiments are included in Section \ref%
{Sec_Numerical}, which provide a sense of the non-asymptotic performance of
our asymptotically optimal choice.

\section{Basic Notions and Main Result\label{Section_Main}}

We now review the basic definitions underlying the estimator from \cite%
{nguyen2018distributionally}. Suppose we have i.i.d. samples $\xi _{i}\sim
\mathcal{N}\left( 0,\Sigma _{0}\right) $ (normally distributed with zero
mean and covariance matrix $\Sigma _{0}$), where $\xi _{i}\in \mathbb{R}^{d}$
and $\Sigma _{0}$ is assumed to be strictly positive definite. We write
\begin{equation*}
\hat{\Sigma}_{n}=\frac{1}{n}\sum_{i=1}^{n}\xi _{i}\xi _{i}^{T},
\end{equation*}%
and let $\mathbb{\hat{P}}_{n}$ correspond to a distribution with mean zero
and covariance matrix $\hat{\Sigma}_{n}$, which we denote as $\mathcal{N}%
\left( 0,\hat{\Sigma}_{n}\right) $. Throughout our development we use the
notation $\left\langle A,B\right\rangle =$ tr$(A^{T}B)$ for any $d\times d$
matrices $A$, $B$, where $A^{T}$ denotes the transpose of $A$. The identity
matrix is denoted by $I$. We use $\Rightarrow $ and $\overset{p}{\rightarrow
}$ to denote weak convergence (convergence in distribution) and convergence
in probability, respectively. Finally, for two symmetric matrices $A,B\in
\mathbb{R}^{d\times d}$, $A\preceq B$ denotes that $A-B $ is positive
semi-definite.

We define the Stein loss as
\begin{equation*}
L(X,\Sigma _{0})=-\log \det (X\Sigma _{0})+\left\langle X,\Sigma
_{0}\right\rangle -d,
\end{equation*}%
where $X$ is any estimator of the precision matrix (i.e. the inverse
covariance matrix).

Given an uncertainty size $\rho $, let us write $X_{n}^{\ast }(\rho )$ for
the distributionally robust estimator proposed in \cite%
{nguyen2018distributionally}; i.e,%
\begin{equation}
X_{n}^{\ast }(\rho )=\arg \min_{X\succ 0}\left\{ -\log \det X+\sup_{\mathbb{Q%
}\in \mathcal{P}_{\rho }}\mathbb{E}^{\mathbb{Q}}\left[ \left\langle \xi \xi
^{T},X\right\rangle \right] \right\} ,  \label{DRO_sol}
\end{equation}%
where $\mathcal{P}_{\rho }$ is the set of $d$-dimensional normal
distributions with mean zero and which lie within distance $\rho $ measured
in the Wasserstein sense, which we define next; see, for example, Chapter 7
in \cite{villani2003topics} for background on Wasserstein distances and,
more generally, optimal transport costs. The Wasserstein distance (more
precisely, the Wasserstein distance of order two with Euclidean norm) is
defined as follows. First, let $\mathcal{M}_{+}(\mathbb{R}^{d}\times \mathbb{%
R}^{d})$ be the set of Borel (positive) measures on $\mathbb{R}^{d}\times
\mathbb{R}^{d}$ and define the Wasserstein distance between $\mathbb{\hat{P}}%
_{n}$ and $\mathbb{Q}$ via%
\begin{eqnarray*}
\mathbb{W}_{2}(\mathbb{\hat{P}}_{n},\mathbb{Q})=\inf_{\pi \in \mathcal{M}%
_{+}(\mathbb{R}^{d}\times \mathbb{R}^{d})}\left\{ \left( \int \left\Vert
z-w\right\Vert _{2}^{2}\pi \left( \mathrm{d}x,\mathrm{d}w\right) \right)
^{1/2}\right.  && \\
:\int_{w\in \mathbb{R}^{d}}\pi \left( \mathrm{d}x,\mathrm{d}w\right) =%
\mathbb{\hat{P}}_{n}\left( \mathrm{d}x\right) , &&\left. \int_{x\in \mathbb{R%
}^{d}}\pi \left( \mathrm{d}x,\mathrm{d}w\right) =\mathbb{Q}\left( \mathrm{d}%
w\right) \right\} .
\end{eqnarray*}%
Then
\begin{equation*}
\mathcal{P}_{\rho }=\left\{ \mathbb{Q}\sim \mathcal{N}\left( 0,\Sigma
\right) \text{ for some }\Sigma :\mathbb{W}_{2}(\mathbb{\hat{P}}_{n},\mathbb{%
Q})\leq \rho \right\} .
\end{equation*}

In simple terms, $\mathcal{P}_{\rho }$ is the set of probability measures
corresponding to a Gaussian distribution which lie within $\rho $ units in
the Wasserstein distance from $\mathbb{\hat{P}}_{n}$. It is well known (in
fact, an immediate consequence of the delta method) that $n^{1/2}\mathbb{W}%
_{2}(\mathbb{\hat{P}}_{n},\mathbb{P}_{\infty })\Rightarrow \mathbb{W}$ for
some limit law $\mathbb{W}$ which can be explicitly characterized (but not
important for our development; see \cite{rippl2016limit}). This result
suggests that $\rho :=\rho _{n}$ should scale in order $O\left(
n^{-1/2}\right) $. It is therefore somewhat surprising that the optimal
scaling of $\rho $ for the purpose of minimizing the Stein loss is actually
significantly smaller, as the main result of this paper indicates next.

\begin{theorem}
\label{Thm_Main}Let
\begin{equation}
\rho _{n}=\arg \min_{\rho \geq 0}\{\mathbb{E}[L(X_{n}^{\ast }(\rho ),\Sigma
_{0})]\},  \label{rho_n_def}
\end{equation}%
then
\begin{equation*}
\lim_{n \rightarrow \infty} n\rho _{n}=\rho _{\ast },
\end{equation*}%
for $\rho _{\ast }>0$.
\end{theorem}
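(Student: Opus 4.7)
The plan is to Taylor-expand the expected Stein risk $R_n(\rho):=\mathbb{E}[L(X_n^*(\rho),\Sigma_0)]$ to second order around $\rho=0$, show that the coefficients of $\rho$ and $\rho^2$ scale like $n^{-1}$ and $1$ respectively, and then conclude that the minimizer satisfies $n\rho_n\to\rho_*$ for the explicit constant given by the ratio of these coefficients.

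I would first invoke the explicit characterization of $X_n^*(\rho)$ from \cite{nguyen2018distributionally}. Since the ambiguity set consists of centered Gaussians, the Wasserstein-2 distance reduces to the Bures metric, the problem decouples in the eigenbasis of $\hat\Sigma_n$, and $X_n^*(\rho)$ commutes with $\hat\Sigma_n$. Writing $\hat\lambda_i$ for the eigenvalues of $\hat\Sigma_n$ and $x_i=x_i(\rho)$ for those of $X_n^*(\rho)$, the KKT conditions reduce to the scalar equations
\[
\sqrt{\hat\lambda_i x_i}=1-x_i/\gamma,\qquad \sum_i x_i=\gamma^2\rho^2,
\]
where $\gamma>0$ is the Lagrange multiplier of the Wasserstein constraint; at $\rho=0$ this recovers $X_n^*(0)=\hat\Sigma_n^{-1}$. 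Solving these perturbatively around $\gamma=+\infty$ yields $\gamma=\rho^{-1}\sqrt{\operatorname{tr}(\hat\Sigma_n^{-1})}+O(1)$ and the expansion
\[
X_n^*(\rho)=\hat\Sigma_n^{-1}-\frac{2\rho}{\sqrt{\operatorname{tr}(\hat\Sigma_n^{-1})}}\,\hat\Sigma_n^{-2}+O(\rho^2).
\]

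Next, using $\nabla_X L(X,\Sigma_0)|_{\hat\Sigma_n^{-1}}=-\Delta_n$ with $\Delta_n:=\hat\Sigma_n-\Sigma_0$, and the Hessian form $H\mapsto\operatorname{tr}(\hat\Sigma_n H\hat\Sigma_n H)$ of $-\log\det$ at $\hat\Sigma_n^{-1}$, substitution gives
\[
R_n(\rho)-R_n(0)=\rho\,\mathbb{E}\!\left[\frac{2\operatorname{tr}(\Delta_n\hat\Sigma_n^{-2})}{\sqrt{\operatorname{tr}(\hat\Sigma_n^{-1})}}\right]+\rho^2\,\mathbb{E}\!\left[\frac{2\operatorname{tr}(\hat\Sigma_n^{-2})}{\operatorname{tr}(\hat\Sigma_n^{-1})}\right]+O(\rho^3).
\]
The quadratic coefficient converges by the law of large numbers to $C_B:=2\operatorname{tr}(\Sigma_0^{-2})/\operatorname{tr}(\Sigma_0^{-1})>0$. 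For the linear coefficient, $\mathbb{E}[\operatorname{tr}(\Delta_n\Sigma_0^{-2})]=0$, so the leading contribution comes from expanding $\hat\Sigma_n^{-2}$ one more order in $\Delta_n$ and applying the Wishart identity $\mathbb{E}[\operatorname{tr}(\Delta_n A\Delta_n B)]=n^{-1}[\operatorname{tr}(\Sigma_0 A\Sigma_0 B)+\operatorname{tr}(A\Sigma_0)\operatorname{tr}(B\Sigma_0)]$; this yields a coefficient of the form $-C_A/n+o(1/n)$ with $C_A>0$ explicit. Minimizing the resulting rescaled parabola $-C_A t+C_B t^2$ in $t=n\rho$ identifies $\rho_*=C_A/(2C_B)>0$.

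Finally, to upgrade this pointwise expansion to convergence of the argmin, I would combine uniform integrability of $L(X_n^*(\rho),\Sigma_0)$ in an $O(1/n)$ window---relying on explicit lower bounds for the $x_i(\rho)$ visible from the KKT conditions together with controlled negative moments of the smallest eigenvalue of $\hat\Sigma_n$---with a coercivity estimate showing $R_n(\rho)-R_n(0)\ge c\rho^2$ uniformly in $n$ on a fixed neighborhood of $0$. These force $\rho_n$ into the shrinking window, and a standard argmin-continuity argument yields $n\rho_n\to\rho_*$. The main obstacle is the transition from pointwise convergence of $n^2[R_n(t/n)-R_n(0)]$ to uniform convergence on compact $t$-intervals: the Lagrange multiplier $\gamma(\rho)$ diverges as $\rho\to 0$, so the Taylor remainder in the expansion of $X_n^*(\rho)$ must be controlled uniformly in both $n$ and the rescaled parameter $t$, which requires careful spectral moment bounds on $\hat\Sigma_n$ at precisely the order needed to close the argument.
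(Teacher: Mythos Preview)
Your approach is essentially the paper's: both hinge on the expansion $X_n^*(\rho)=\hat\Sigma_n^{-1}+\hat A_n\rho+O(\rho^2)$ with $\hat A_n=-2\hat\Sigma_n^{-2}/\sqrt{\operatorname{tr}(\hat\Sigma_n^{-1})}$, the cancellation $\mathbb E[\langle\Delta_n,A_0\rangle]=0$ that turns the apparently $O(n^{-1/2})$ linear coefficient into an $O(n^{-1})$ one, and uniform integrability controls based on Wishart eigenvalue moments. The paper packages the endgame differently: rather than expanding $R_n(\rho)$ and invoking argmin continuity, it works directly with the first-order condition $R_n'(\rho_n)=0$, rearranges it as $\mathbb E[\langle\hat\Sigma_n\hat A_n\hat\Sigma_n,\hat A_n\rangle]\,\rho_n+O(\rho_n^2)=\mathbb E[\langle\Delta_n,\hat A_n+O(\rho_n)\rangle]$, and bootstraps---first $\sqrt n\,\rho_n\to 0$ (from $\mathbb E\langle Z,A_0\rangle=0$), then $n\rho_n\to\rho_*$ (from $\mathbb E\langle Z,Z_A\rangle$). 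This sidesteps the need for uniform-in-$t$ convergence of $n^2[R_n(t/n)-R_n(0)]$, which is the obstacle you correctly flag. Two small caveats on your sketch: the displayed $\rho^2$ coefficient omits the term $\tfrac12\mathbb E[\langle-\Delta_n,\partial_\rho^2 X_n^*(0)\rangle]$, which is $O(1/n)$ in expectation and hence harmless for the leading-order limit but is not an $O(\rho^3)$ remainder; and the coercivity claim $R_n(\rho)-R_n(0)\ge c\rho^2$ cannot hold as stated, since the linear term is strictly negative---what you need (and what suffices) is a bound of the form $R_n(\rho)-R_n(0)\ge -C\rho/n+c\rho^2$, which does localize $\rho_n$ to an $O(1/n)$ window.
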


\textbf{Remark:} The explicit expression of $\rho _{\ast }$ can be
characterized as follows. First, let us consider the weak limit%
\begin{equation*}
Z=\lim_{n\rightarrow \infty }n^{1/2}\left( \hat{\Sigma}_{n}-\Sigma
_{0}\right) ,
\end{equation*}%
which, by the Central Limit Theorem is a matrix with correlated mean zero
Gaussian entries. Then, we have
\begin{equation*}
\rho _{\ast }=\mathbb{E}\left( \frac{4\text{tr}\left( \Sigma
_{0}^{-2}Z\Sigma _{0}^{-1}Z\right) }{\text{tr}(\Sigma _{0}^{-1})^{1/2}}-%
\frac{\text{tr}(Z\Sigma _{0}^{-2})^{2}}{\text{tr}(\Sigma _{0}^{-1})^{3/2}}%
\right) \frac{\text{tr}(\Sigma _{0}^{-1})}{4\text{tr}(\Sigma _{0}^{-2})}.
\end{equation*}%
Theorem \ref{Thm_Main} indicates that $\rho _{\ast }>0$, which will be
verified as a part of the proof of this result.

\section{Proof of Theorem \protect\ref{Thm_Main}\label{Proof_of_Thm_1}}

We first collect the following observations, which we summarize in the form
of propositions and lemmas for which we provide references or corresponding
proofs in the appendix \cite{supplement}. We then use these results to
develop the proof of Theorem \ref{Thm_Main}.

\subsection{Auxiliary Results}

We provide a lemma based on the analytical solution (Theorem 3.1 in \cite%
{nguyen2018distributionally}). {\color{blue} }

\begin{lemma}
\label{lemma} When $n>d$ and $\rho \leq 1,$ with probability one, we have
following Taylor expansions%
\begin{eqnarray*}
\partial X_{n}^{\ast }(\rho )/\partial \rho  &=&\hat{A}_{n}+O(\rho ), \\
X_{n}^{\ast }(\rho )^{-1} &=&\hat{\Sigma}_{n}-\hat{\Sigma}_{n}\hat{A}_{n}%
\hat{\Sigma}_{n}\rho +O(\rho ^{2}),
\end{eqnarray*}%
where%
\begin{equation*}
\hat{A}_{n}=-\frac{2}{\sqrt{\text{tr}(\hat{\Sigma}_{n}^{-1})}}\hat{\Sigma}%
_{n}^{-2},
\end{equation*}%
Furthermore, the remainder terms satisfy%
\begin{eqnarray}
\frac{\partial X_{n}^{\ast }(\rho )}{\partial \rho }-\hat{A}_{n} &\succeq
&-\left( \frac{4\hat{M}_{n}+2\hat{M}_{n}^{2}}{\sqrt{\sum_{i=1}^{d}\hat{%
\lambda}_{i}^{-1}}}\sum_{i=1}^{d}\frac{\hat{v}_{i}\hat{v}_{i}^{T}}{\hat{%
\lambda}_{i}^{2}}\right) \rho   \notag \\
\frac{\partial X_{n}^{\ast }(\rho )}{\partial \rho }-\hat{A}_{n} &\preceq
&\left( \frac{2\hat{M}_{n}^{3}+8\hat{M}_{n}}{\sqrt{\sum_{i=1}^{d}\hat{\lambda%
}_{i}^{-1}}}\sum_{i=1}^{d}\frac{\hat{v}_{i}\hat{v}_{i}^{T}}{\hat{\lambda}%
_{i}^{2}}\right) \rho ,  \label{remainder}
\end{eqnarray}%
and
\begin{equation}
-\left( \frac{2\left( 1+\hat{M}_{n}\right) ^{2}}{\sum_{i=1}^{d}\hat{\lambda}%
_{i}^{-1}}\sum_{i=1}^{d}\frac{\hat{v}_{i}\left( \hat{v}_{i}\right) ^{T}}{%
\hat{\lambda}_{i}}\right) \rho ^{2}\preceq X_{n}^{\ast }(\rho )^{-1}-\hat{%
\Sigma}_{n}+\hat{\Sigma}_{n}\hat{A}_{n}\hat{\Sigma}_{n}\rho \preceq \left(
\frac{2\hat{M}_{n}}{\sqrt{\sum_{i=1}^{d}\hat{\lambda}_{i}^{-1}}}%
\sum_{i=1}^{d}\hat{v}_{i}\left( \hat{v}_{i}\right) ^{T}\right) \rho ^{2},
\label{remainder2}
\end{equation}%
where
\begin{equation*}
\hat{M}_{n}=\frac{8}{\left( \min_{i}\hat{\lambda}_{i}\right) \min \left\{ d,%
\frac{\sqrt{d}}{\sqrt{\max_{i}\hat{\lambda}_{i}}}\right\} }.
\end{equation*}
\end{lemma}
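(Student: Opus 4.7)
My plan is to start from the closed-form solution for $X_n^*(\rho)$ provided by Theorem 3.1 of Nguyen, Kuhn, and Esfahani. Because the objective in \eqref{DRO_sol} is invariant under orthogonal changes of coordinates that fix $\hat{\Sigma}_n$, the optimizer $X_n^*(\rho)$ commutes with $\hat{\Sigma}_n$; writing the spectral decomposition $\hat{\Sigma}_n = \sum_{i=1}^d \hat{\lambda}_i \hat{v}_i \hat{v}_i^T$, I may parameterize $X_n^*(\rho) = \sum_i x_i(\rho)\hat{v}_i\hat{v}_i^T$. The inner supremum in \eqref{DRO_sol} reduces to the Gelbrich (Bures) distance between centered Gaussians, and the first-order conditions collapse to $d$ scalar stationarity equations in $x_i(\rho)$ coupled through a single Lagrange multiplier $\gamma(\rho)$ enforcing the (tight) Wasserstein constraint. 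At $\rho=0$ the constraint forces $\mathbb{Q}=\hat{\mathbb{P}}_n$, hence $x_i(0)=1/\hat{\lambda}_i$ and $X_n^*(0)=\hat{\Sigma}_n^{-1}$.

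Next, I would implicitly differentiate these scalar equations at $\rho=0$ to obtain $x_i'(0)$. The Bures formula makes factors of the form $\sqrt{\mathrm{tr}(X_n^*(\rho)^{-1}\hat{\Sigma}_n^{-1})}$ appear in the derivatives; evaluating at $\rho=0$ produces the prefactor $\sqrt{\mathrm{tr}(\hat{\Sigma}_n^{-1})}$ and yields $x_i'(0)=-2\hat{\lambda}_i^{-2}/\sqrt{\mathrm{tr}(\hat{\Sigma}_n^{-1})}$, which reassembles into $\hat{A}_n=-\tfrac{2}{\sqrt{\mathrm{tr}(\hat{\Sigma}_n^{-1})}}\hat{\Sigma}_n^{-2}$. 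With $\partial X_n^*/\partial \rho|_{\rho=0}=\hat{A}_n$ in hand, I would obtain the expansion for $X_n^*(\rho)^{-1}$ via the matrix-calculus identity $(X_n^*(\rho)^{-1})'=-X_n^*(\rho)^{-1}(\partial X_n^*/\partial\rho)X_n^*(\rho)^{-1}$, which at $\rho=0$ equals $-\hat{\Sigma}_n\hat{A}_n\hat{\Sigma}_n$, giving precisely the claimed linearization $\hat{\Sigma}_n-\hat{\Sigma}_n\hat{A}_n\hat{\Sigma}_n\rho+O(\rho^2)$.

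The main obstacle is establishing the explicit L\"owner-ordered remainder estimates in \eqref{remainder} and \eqref{remainder2} with the prefactor $\hat{M}_n$. The strategy is to differentiate the scalar stationarity equations a second time and bound $x_i''(\rho)$ uniformly for $\rho\in[0,1]$. Three ingredients are needed: (i) a lower bound on each $x_i(\rho)$ ensuring $X_n^*(\rho)$ stays well-conditioned along the interval, so that no eigenvalue collapses; (ii) two-sided control on $\gamma(\rho)$ and $\gamma'(\rho)$ obtained by applying the Wasserstein constraint together with the assumption $\rho\leq 1$ (this is where the $\min\{d,\sqrt{d}/\sqrt{\max_i\hat{\lambda}_i}\}$ term in $\hat{M}_n$ originates, via a quadratic bound on the Bures distance); and (iii) elementary trace inequalities to translate the scalar bounds eigenvalue-by-eigenvalue into the matrix inequalities. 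Tracking these constants carefully reproduces the prefactor $\hat{M}_n$, with its $1/\min_i\hat{\lambda}_i$ factor reflecting the worst-case conditioning over $\rho\in[0,1]$.

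Finally, integrating these second-derivative bounds via Taylor's theorem with integral remainder upgrades the pointwise derivative expansion to the two-sided matrix inequalities: the linear-in-$\rho$ envelope in \eqref{remainder} follows from bounding $\partial^2 X_n^*/\partial\rho^2$ between the two matrix expressions indicated, while the quadratic-in-$\rho$ envelope in \eqref{remainder2} for $X_n^*(\rho)^{-1}$ follows by one more application of the inverse derivative identity combined with the product rule and the same bounds on $x_i$, $x_i'$, and $\gamma$. The most delicate accounting is in step (ii), since the fraction appearing in $\hat{M}_n$ must be loose enough to simultaneously bound $\gamma(\rho)$, $\gamma'(\rho)$, and the inverse smallest eigenvalue of $X_n^*(\rho)$; this is where I would spend most of the technical effort.
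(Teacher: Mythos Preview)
Your identification of the leading terms is correct and matches the paper: both start from Theorem 3.1 of Nguyen--Kuhn--Esfahani, work eigenvalue-by-eigenvalue, and recover $x_i'(0)=-2\hat\lambda_i^{-2}/\sqrt{\mathrm{tr}(\hat\Sigma_n^{-1})}$, hence $\hat A_n$. Where you diverge is in the remainder analysis. You propose to bound $x_i''(\rho)$ and then invoke Taylor with integral remainder, passing to $X_n^*(\rho)^{-1}$ via $(X^{-1})'=-X^{-1}X'X^{-1}$. The paper never touches second derivatives. Instead it exploits that the closed form gives a clean expression for $1/x_i^*$ directly,
\[
\frac{1}{x_i^*}=\frac{\sqrt{\hat\lambda_i^2(\gamma^*)^2+4\hat\lambda_i\gamma^*}+\hat\lambda_i\gamma^*+2}{2\gamma^*},
\]
and bounds this quantity above and below by elementary rationalization, e.g.\ $\sqrt{a^2+4a}-(a+2)=-4/(\sqrt{a^2+4a}+a+2)\in[-2/a,0]$. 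Combined with the two-sided estimate $\rho/\sqrt{\sum_i\hat\lambda_i^{-1}}\le 1/\gamma^*\le (\rho+\hat M_n\rho^2)/\sqrt{\sum_i\hat\lambda_i^{-1}}$ (obtained straight from the algebraic equation for $\gamma^*$), this yields \eqref{remainder2} without any integration. For \eqref{remainder} the paper computes $d\gamma^*/d\rho$ by implicit differentiation of the defining equation and $\partial x_i/\partial\gamma^*$ from the explicit formula, then bounds each factor separately and applies the chain rule---again a first-order computation, not a second-derivative bound.

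Your route is sound in principle, but heavier: you would need uniform control of $x_i''$ on $[0,1]$, which forces you to differentiate the coupled system twice and to control $\gamma''$ as well. The paper's direct algebraic approach sidesteps all of this and is what produces the specific constants $4\hat M_n+2\hat M_n^2$, $2\hat M_n^3+8\hat M_n$, $2(1+\hat M_n)^2$, $2\hat M_n$; a second-derivative argument would likely yield different (and messier) prefactors, so matching the stated bounds exactly via your plan would be awkward. If you want the precise inequalities as stated, switch to bounding $1/x_i^*$ and $\partial x_i/\partial\rho$ directly from the closed form.
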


From Lemma \ref{lemma}, we have that
\begin{equation}
X_{n}^{\ast }(\rho )^{-1}-\Sigma _{0}=\left( \hat{\Sigma}_{n}-\Sigma
_{0}\right) -\hat{\Sigma}_{n}\hat{A}_{n}\hat{\Sigma}_{n}\rho +O(\rho ^{2}).
\label{rho_decomp2}
\end{equation}

The first proposition provides standard asymptotic normality results for
various estimators.

\begin{proposition}
\label{probCLT}The following convergence results hold

\begin{itemize}
\item[(1)] $\frac{1}{\sqrt{n}}\sum_{i=1}^{n}\xi _{i}\Rightarrow N(0,\Sigma
_{0}),$

\item[(2)] $\sqrt{n}\left( \hat{\Sigma}_{n}-\Sigma _{0}\right) \Rightarrow Z$%
, where $Z$ is a symmetric matrix of jointly Gaussian random variables with
mean zero and
\begin{equation*}
cov(Z_{i_{1}j_{1}},Z_{i_{2}j_{2}})=\mathbb{E}\xi ^{(i_{1})}\xi ^{(j_{1})}\xi
^{(i_{2})}\xi ^{(j_{2})}-\left( \mathbb{E}\xi ^{(i_{1})}\xi
^{(j_{1})}\right) \left( \mathbb{E}\xi ^{(i_{2})}\xi ^{(j_{2})}\right)
=\sigma _{i_{1},i_{2}}^{2}\sigma _{j_{1},j_{2}}^{2}{}+\sigma
_{i_{1},j_{2}}^{2}\sigma _{j_{1},i_{2}}^{2},
\end{equation*}%
where $\xi ^{(i)}$ is the i-th entry of $\xi $ and $\sigma
_{i,j}^{2}=cov(\xi ^{(i)}\xi ^{(j)})$.

\item[(3)] $\hat{A}_{n}\overset{p}{\rightarrow }A_{0}$ and $\sqrt{n}\left(
\hat{A}_{n}-A_{0}\right) \Rightarrow Z_{A}$, where $A_{0}=-\frac{2}{\sqrt{%
\text{tr}(\Sigma _{0}^{-1})}}\Sigma _{0}^{-2}$ and
\begin{equation*}
Z_{A}=-\frac{\text{tr}\left( \Sigma _{0}^{-1}Z\Sigma _{0}^{-1}\right) \Sigma
_{0}^{-2}}{\text{tr}(\Sigma _{0}^{-1})^{3/2}}+2\frac{\Sigma _{0}^{-1}Z\Sigma
_{0}^{-2}+\Sigma _{0}^{-2}Z\Sigma _{0}^{-1}}{\text{tr}(\Sigma
_{0}^{-1})^{1/2}}.
\end{equation*}
\end{itemize}
\end{proposition}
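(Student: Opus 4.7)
The three parts are essentially direct applications of the multivariate CLT and the delta method, so the plan will be largely mechanical. I will proceed part by part.

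For part (1), since $\xi_i \sim \mathcal{N}(0,\Sigma_0)$ are i.i.d.\ with finite second moments, this is the multivariate CLT (in fact, the distributional identity is exact, since sums of independent Gaussians are Gaussian).

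For part (2), I would vectorize: set $Y_i = \xi_i\xi_i^T - \Sigma_0$, viewed as an element of $\mathbb{R}^{d^2}$ under the $\mathrm{vec}$ map. The $Y_i$ are i.i.d.\ mean-zero random matrices, and they have finite second moments because the entries are products of two jointly Gaussian variables (hence finite fourth moments of $\xi^{(i)}$). Apply the multivariate CLT to $\sqrt{n}\,\bar Y_n = \sqrt{n}(\hat\Sigma_n - \Sigma_0)$ to obtain a Gaussian limit $Z$. The covariance of the limit is just the entrywise covariance of $\xi^{(i_1)}\xi^{(j_1)}$ and $\xi^{(i_2)}\xi^{(j_2)}$; for mean-zero Gaussian vectors this fourth moment is Isserlis'/Wick's formula
\[
\mathbb{E}[\xi^{(i_1)}\xi^{(j_1)}\xi^{(i_2)}\xi^{(j_2)}]
=\sigma^2_{i_1,j_1}\sigma^2_{i_2,j_2}+\sigma^2_{i_1,i_2}\sigma^2_{j_1,j_2}+\sigma^2_{i_1,j_2}\sigma^2_{j_1,i_2},
\]
and subtracting $\mathbb{E}[\xi^{(i_1)}\xi^{(j_1)}]\mathbb{E}[\xi^{(i_2)}\xi^{(j_2)}] = \sigma^2_{i_1,j_1}\sigma^2_{i_2,j_2}$ leaves the stated covariance expression. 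Symmetry of $Z$ follows from symmetry of $\hat\Sigma_n - \Sigma_0$.

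For part (3), I would apply the continuous mapping theorem and the multivariate delta method to the map
\[
f(\Sigma) = -\frac{2}{\sqrt{\mathrm{tr}(\Sigma^{-1})}}\Sigma^{-2},
\]
defined on the open cone of strictly positive definite $d\times d$ matrices. Since $\Sigma_0\succ 0$, $f$ is smooth in a neighborhood of $\Sigma_0$; consistency $\hat A_n\overset{p}{\to}A_0=f(\Sigma_0)$ follows from $\hat\Sigma_n\overset{p}{\to}\Sigma_0$ (law of large numbers) and continuity. For the CLT part, I would compute the Fréchet derivative $Df(\Sigma_0)[H]$ via product rule. Using the standard identities
\[
\tfrac{d}{dt}\big|_{0}(\Sigma_0+tH)^{-1}=-\Sigma_0^{-1}H\Sigma_0^{-1},\qquad
\tfrac{d}{dt}\big|_{0}(\Sigma_0+tH)^{-2}=-\Sigma_0^{-1}H\Sigma_0^{-2}-\Sigma_0^{-2}H\Sigma_0^{-1},
\]
and
\[
\tfrac{d}{dt}\big|_{0}\,\mathrm{tr}\big((\Sigma_0+tH)^{-1}\big)=-\mathrm{tr}(\Sigma_0^{-1}H\Sigma_0^{-1}),
\]
the product rule yields
\[
Df(\Sigma_0)[H]
= -\frac{\mathrm{tr}(\Sigma_0^{-1}H\Sigma_0^{-1})\,\Sigma_0^{-2}}{\mathrm{tr}(\Sigma_0^{-1})^{3/2}}
+\frac{2\,(\Sigma_0^{-1}H\Sigma_0^{-2}+\Sigma_0^{-2}H\Sigma_0^{-1})}{\mathrm{tr}(\Sigma_0^{-1})^{1/2}}.
\]
Substituting $H=Z$ produces the expression for $Z_A$, and the delta method applied to the joint convergence from part (2) then gives $\sqrt{n}(\hat A_n-A_0)\Rightarrow Z_A$.

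There is no real obstacle here; the only thing requiring care is bookkeeping in the product-rule computation for $Df(\Sigma_0)$ and verifying that $f$ is $C^1$ in a neighborhood of $\Sigma_0$ (which follows from $\Sigma_0\succ 0$ and smoothness of matrix inversion on the invertible cone). Symmetry of $Z_A$ is inherited from symmetry of $Z$ and $\Sigma_0$, which is easily checked from the closed-form expression.
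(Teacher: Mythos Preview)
Your proposal is correct and follows essentially the same approach as the paper: the multivariate CLT for parts (1) and (2), and for part (3) the continuous mapping theorem for consistency together with the delta method applied to $f(\Sigma)=-2(\mathrm{tr}(\Sigma^{-1}))^{-1/2}\Sigma^{-2}$, with the same Fr\'echet derivative computation. Your version adds a bit more detail (explicit use of Isserlis' formula for the fourth-moment covariance and the standard matrix-derivative identities), but the structure and key steps coincide with the paper's proof.
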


Further, we also have the following observations.

\begin{proposition}
\label{large_zero}

\begin{itemize}
\item[(1)] $\mathbb{E}\left\langle Z,Z_{A}\right\rangle >0$,

\item[(2)] $\mathbb{E}\left\langle \hat{\Sigma}_{n}-\Sigma _{0},\hat{A}%
_{n}-A_{0}\right\rangle >0.$
\end{itemize}
\end{proposition}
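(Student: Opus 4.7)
For (1), I would use that $\langle Z, Z_A\rangle$ is a homogeneous quadratic in the Gaussian matrix $Z$ and evaluate its mean via Isserlis' theorem. For (2), I would Taylor-expand $\hat{A}_n = g(\hat{\Sigma}_n)$ around $\Sigma_0$ with $g(\Sigma) = -2\Sigma^{-2}/\sqrt{\text{tr}(\Sigma^{-1})}$, use the unbiasedness $\mathbb{E}\hat{\Sigma}_n = \Sigma_0$ to kill the deterministic first-order term, and then reduce the claim (for $n$ sufficiently large) to (1) via the delta method together with a uniform integrability argument.

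\textbf{Part (1).} Using the cyclic property of trace and the symmetry of $Z$ and $\Sigma_0$, the given formula for $Z_A$ yields
\begin{equation*}
\langle Z, Z_A\rangle = \frac{4\,\text{tr}(\Sigma_0^{-2}Z\Sigma_0^{-1}Z)}{\text{tr}(\Sigma_0^{-1})^{1/2}} - \frac{(\text{tr}(Z\Sigma_0^{-2}))^2}{\text{tr}(\Sigma_0^{-1})^{3/2}}.
\end{equation*}
Applying Isserlis' theorem with the covariance structure in Proposition~\ref{probCLT}(2) produces the general identities $\mathbb{E}(\text{tr}(ZA))^2 = 2\,\text{tr}(A\Sigma_0 A\Sigma_0)$ and $\mathbb{E}\,\text{tr}(BZCZ) = \text{tr}(B\Sigma_0 C\Sigma_0) + \text{tr}(B\Sigma_0)\text{tr}(C\Sigma_0)$ for symmetric $A,B,C$. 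Specializing gives $\mathbb{E}(\text{tr}(Z\Sigma_0^{-2}))^2 = 2\,\text{tr}(\Sigma_0^{-2})$ and $\mathbb{E}\,\text{tr}(\Sigma_0^{-2}Z\Sigma_0^{-1}Z) = (d+1)\,\text{tr}(\Sigma_0^{-1})$, whence
\begin{equation*}
\mathbb{E}\langle Z, Z_A\rangle = 4(d+1)\,\text{tr}(\Sigma_0^{-1})^{1/2} - \frac{2\,\text{tr}(\Sigma_0^{-2})}{\text{tr}(\Sigma_0^{-1})^{3/2}}.
\end{equation*}
Positivity is then equivalent to $2(d+1)\,\text{tr}(\Sigma_0^{-1})^2 > \text{tr}(\Sigma_0^{-2})$, which follows from the elementary bound $(\sum_i \lambda_i^{-1})^2 \ge \sum_i \lambda_i^{-2}$ for the positive eigenvalues $\lambda_i$ of $\Sigma_0$, together with $d \ge 1$.

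\textbf{Part (2) and the main obstacle.} Because $\mathbb{E}\hat{\Sigma}_n = \Sigma_0$ and $A_0$ is deterministic, $\mathbb{E}\langle\hat{\Sigma}_n - \Sigma_0, A_0\rangle = 0$, and the quantity of interest reduces to $\mathbb{E}\langle\hat{\Sigma}_n - \Sigma_0, \hat{A}_n\rangle$. A direct differentiation verifies $Z_A = Dg(\Sigma_0)[Z]$, so the delta method delivers the joint weak convergence $\sqrt{n}(\hat{\Sigma}_n - \Sigma_0, \hat{A}_n - A_0) \Rightarrow (Z, Z_A)$, and the continuous mapping theorem then gives $n\langle\hat{\Sigma}_n - \Sigma_0, \hat{A}_n - A_0\rangle \Rightarrow \langle Z, Z_A\rangle$. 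The main obstacle is upgrading this to convergence in mean: to conclude $n\,\mathbb{E}\langle\hat{\Sigma}_n - \Sigma_0, \hat{A}_n - A_0\rangle \to \mathbb{E}\langle Z, Z_A\rangle > 0$, and hence positivity for all sufficiently large $n$ (which is what is needed in the proof of Theorem~\ref{Thm_Main}), we require uniform integrability. Since $\hat{A}_n$ contains $\hat{\Sigma}_n^{-2}/\sqrt{\text{tr}(\hat{\Sigma}_n^{-1})}$, this reduces to controlling inverse moments of the Wishart matrix $n\hat{\Sigma}_n$; such bounds are classical once $n-d$ is sufficiently large, and combined with Cauchy--Schwarz and the Gaussian moments of $\hat{\Sigma}_n - \Sigma_0$ they yield the required $L^{1+\varepsilon}$ bound and close the argument.
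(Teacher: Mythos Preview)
Your Part~(1) is correct but proceeds quite differently from the paper. You compute $\mathbb{E}\langle Z,Z_A\rangle$ in closed form via Isserlis' theorem and then check positivity by the scalar inequality $\bigl(\sum_i\lambda_i^{-1}\bigr)^2\ge\sum_i\lambda_i^{-2}$. The paper instead proves the \emph{pointwise} bound $\langle Z,Z_A\rangle>0$ almost surely: it applies the trace Cauchy--Schwarz inequality $\text{tr}(A^TA)\,\text{tr}(B^TB)\ge\text{tr}(A^TB)^2$ with $A=\Sigma_0^{-1/2}$ and $B=\Sigma_0^{-1/2}Z\Sigma_0^{-1}$ to get $\text{tr}(\Sigma_0^{-1})\,\text{tr}(\Sigma_0^{-1}Z\Sigma_0^{-2}Z)\ge\text{tr}(Z\Sigma_0^{-2})^2$, with equality only when $Z$ is a scalar multiple of $\Sigma_0$ (a null event). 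Your route yields an explicit value for $\mathbb{E}\langle Z,Z_A\rangle$, which is a bonus; the paper's route yields the stronger a.s.\ statement and avoids any moment computation.

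Your Part~(2), however, is weaker than what the proposition asserts and than what the paper proves. The paper again obtains a \emph{pointwise} inequality valid for every $n>d$: expanding $\langle\hat\Sigma_n-\Sigma_0,\hat A_n-A_0\rangle$ directly and applying the same trace Cauchy--Schwarz bound reduces it to the elementary inequality
\[
g(a,b)=-a-b+\frac{a^3}{b^2}+\frac{b^3}{a^2}=\frac{(a^3-b^3)(a^2-b^2)}{a^2b^2}\ge 0
\]
with $a=\sqrt{\text{tr}(\hat\Sigma_n^{-1})}$, $b=\sqrt{\text{tr}(\Sigma_0^{-1})}$, and strict inequality almost surely. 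No asymptotics, no uniform integrability. Your delta-method-plus-UI argument only delivers $n\,\mathbb{E}\langle\hat\Sigma_n-\Sigma_0,\hat A_n-A_0\rangle\to\mathbb{E}\langle Z,Z_A\rangle>0$, hence positivity for all \emph{sufficiently large} $n$; moreover, the UI step you invoke is exactly the content of Lemma~\ref{temp}(3), whose proof in the paper is itself nontrivial. You are right that large $n$ suffices for the application in Theorem~\ref{Thm_Main}, so your approach is adequate for that purpose, but it does not establish the proposition as stated for every $n>d$, and it trades an elementary two-line algebraic argument for the heavier Wishart moment machinery.
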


\begin{lemma}
\label{temp} The following convergence in expectation results hold

\begin{itemize}
\item[(1)] $\mathbb{E}\left[ \left\langle \hat{\Sigma}_{n}\hat{A}_{n}\hat{%
\Sigma}_{n},\hat{A}_{n}\right\rangle \right] \rightarrow \left\langle \Sigma
_{0}A_{0}\Sigma _{0},A_{0}\right\rangle .$

\item[(2)] $\mathbb{E}\left\langle \sqrt{n}\left( \hat{\Sigma}_{n}-\Sigma
_{0}\right) ,\hat{A}_{n}\right\rangle \rightarrow \mathbb{E}\left\langle
Z,A_{0}\right\rangle .$

\item[(3)] $\mathbb{E}\left\langle \sqrt{n}\left( \hat{\Sigma}_{n}-\Sigma
_{0}\right) ,\sqrt{n}\left( \hat{A}_{n}-A_{0}\right) \right\rangle
\rightarrow \mathbb{E}\left\langle Z,Z_{A}\right\rangle .$
\end{itemize}
\end{lemma}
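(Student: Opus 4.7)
The plan is to observe that each statement of Lemma~\ref{temp} asserts an upgrade from convergence in probability or distribution (established in Proposition~\ref{probCLT}) to convergence of the corresponding expectation. I would therefore organize the argument around two uniform moment bounds: (i) central moments of $\sqrt{n}(\hat\Sigma_n-\Sigma_0)$ of every order are bounded uniformly in $n$, since $\hat\Sigma_n$ is an average of i.i.d.\ matrices $\xi_i\xi_i^T$ with Gaussian $\xi_i$; (ii) polynomial moments of $\hat\Sigma_n^{-1}$ and of $1/\mathrm{tr}(\hat\Sigma_n^{-1})$ are finite and bounded uniformly in $n$, for $n$ sufficiently large, via the inverse-Wishart moment formulas together with standard small-ball estimates for $\lambda_{\min}(\hat\Sigma_n)$. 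Together these will supply an integrable envelope in every case.

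For part~(1), I would first simplify algebraically: substituting $\hat A_n=-2\hat\Sigma_n^{-2}/\sqrt{\mathrm{tr}(\hat\Sigma_n^{-1})}$ collapses $\langle \hat\Sigma_n\hat A_n\hat\Sigma_n,\hat A_n\rangle$ to the scalar $4\,\mathrm{tr}(\hat\Sigma_n^{-2})/\mathrm{tr}(\hat\Sigma_n^{-1})$, which is a continuous function of $\hat\Sigma_n$ on the positive-definite cone. Almost-sure convergence of $\hat\Sigma_n$ to $\Sigma_0$ gives the pointwise limit $\langle \Sigma_0 A_0 \Sigma_0, A_0\rangle$, and since the expression is bounded by $4/\lambda_{\min}(\hat\Sigma_n)$ (the ratio is a weighted average of reciprocal eigenvalues), bound~(ii) yields uniform integrability and hence convergence in expectation.

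For part~(3), the crux is a second-order Taylor expansion of $\phi(\Sigma)=-2\Sigma^{-2}/\sqrt{\mathrm{tr}(\Sigma^{-1})}$ around $\Sigma_0$,
\[
\sqrt{n}(\hat A_n-A_0) = \nabla\phi(\Sigma_0)\bigl[\sqrt{n}(\hat\Sigma_n-\Sigma_0)\bigr] + R_n,
\]
where $\|R_n\|$ is dominated by $\sqrt{n}\,\|\hat\Sigma_n-\Sigma_0\|^2$ times a polynomial in $\|\hat\Sigma_n^{-1}\|$. Inserting this into the inner product splits $\langle \sqrt{n}(\hat\Sigma_n-\Sigma_0),\sqrt{n}(\hat A_n-A_0)\rangle$ into a quadratic form in $\sqrt{n}(\hat\Sigma_n-\Sigma_0)$, whose expectation can be evaluated directly from the Gaussian fourth-moment formula of Proposition~\ref{probCLT}(2) and matches $\mathbb{E}\langle Z,Z_A\rangle$ exactly, plus a remainder that tends to zero in $L^1$ by Cauchy--Schwarz together with bounds (i) and (ii). Part~(2) then reduces to part~(3): decomposing $\hat A_n=A_0+(\hat A_n-A_0)$, the first piece contributes $\sqrt{n}\,\langle \mathbb{E}[\hat\Sigma_n-\Sigma_0], A_0\rangle=0$ by unbiasedness, and the second piece equals $n^{-1/2}$ times the bounded expectation from part~(3), yielding the limit $0=\mathbb{E}\langle Z,A_0\rangle$.

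I expect the main obstacle to be part~(3), specifically dominating $\langle\sqrt{n}(\hat\Sigma_n-\Sigma_0),R_n\rangle$ by an integrable envelope that is uniform in $n$. The factor $\sqrt{n}\,\|\hat\Sigma_n-\Sigma_0\|^2$ only contributes at scale $n^{-1/2}$ after sharp control of $\|\hat\Sigma_n^{-1}\|$, so one has to either restrict to the high-probability event $\{\lambda_{\min}(\hat\Sigma_n)\geq \tfrac12 \lambda_{\min}(\Sigma_0)\}$ and handle its complement separately with Wishart tail bounds, or choose Cauchy--Schwarz exponents carefully so that the required inverse-Wishart moments of $\hat\Sigma_n$ remain of finite and uniformly bounded order.
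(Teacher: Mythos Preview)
Your proposal is correct and follows essentially the same strategy as the paper: establish weak convergence via Proposition~\ref{probCLT} and then upgrade to convergence of expectations through uniform integrability, with the latter supplied by inverse-Wishart moment bounds and a good-event/bad-event decomposition (your $\{\lambda_{\min}(\hat\Sigma_n)\geq \tfrac12\lambda_{\min}(\Sigma_0)\}$ playing the role of the paper's $\{\|\hat\Sigma_n-\Sigma_0\|_F\leq\varepsilon\}$, both handled on the complement via large-deviations/Wishart tails).

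Your route is somewhat more streamlined in two places. For part~(1) you observe directly that $4\,\mathrm{tr}(\hat\Sigma_n^{-2})/\mathrm{tr}(\hat\Sigma_n^{-1})\leq 4/\lambda_{\min}(\hat\Sigma_n)$, which reduces the needed uniform bound to a single inverse-Wishart moment; the paper instead works through the full Wishart eigenvalue density and chi-squared comparisons. For part~(2) you reduce to part~(3) via $\hat A_n=A_0+(\hat A_n-A_0)$ and unbiasedness of $\hat\Sigma_n$, whereas the paper proves a separate uniform-integrability bound for $\langle\sqrt{n}(\hat\Sigma_n-\Sigma_0),\hat A_n\rangle$. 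Both shortcuts are valid and buy a cleaner exposition; the paper's more explicit Wishart-density calculations, on the other hand, make the dependence on $n$ and $d$ fully transparent and are reused to control the $O(\rho_n)$ remainder terms appearing in Lemma~\ref{lemma}, which the main proof also needs.
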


The following proposition shows consistency of the estimator.

\begin{proposition}
\label{consistency} For $\rho _{n}$ defined in (\ref{rho_n_def}), we have $%
\lim_{n\rightarrow \infty }\rho _{n}=0.$
\end{proposition}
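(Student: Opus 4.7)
The plan is to use optimality of $\rho_n$ to force the expected Stein loss at $\rho_n$ to vanish, then appeal to coercivity of the Stein loss to conclude that $X_n^{\ast}(\rho_n)^{-1}$ concentrates at $\Sigma_0$, and finally invoke the Wasserstein identity (the DRO constraint binds) to transfer this to $\rho_n\to 0$.

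To begin, since $X_n^{\ast}(0)=\hat{\Sigma}_n^{-1}$ and $\mathbb{E}[L(\hat{\Sigma}_n^{-1},\Sigma_0)]\to 0$ by direct computation on the inverse-Wishart distribution,
\[
\mathbb{E}[L(X_n^{\ast}(\rho_n),\Sigma_0)]\le\mathbb{E}[L(\hat{\Sigma}_n^{-1},\Sigma_0)]\longrightarrow 0
\]
by the definition of $\rho_n$ as the minimizer. Since $L\ge 0$, this $L^1$ convergence gives $L(X_n^{\ast}(\rho_n),\Sigma_0)\overset{p}{\rightarrow}0$.

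Next I would exploit coercivity: writing $L(X,\Sigma_0)=\sum_{i=1}^{d}h(\mu_i)$ with $\mu_i$ the eigenvalues of $\Sigma_0^{1/2}X\Sigma_0^{1/2}$ and $h(t)=t-\log t-1$ (which blows up as $t\to 0^+$ or $t\to\infty$), the sublevel sets of $L(\,\cdot\,,\Sigma_0)$ are compact in the cone of positive definite matrices, with unique zero at $X=\Sigma_0^{-1}$. The preceding in-probability convergence therefore yields $X_n^{\ast}(\rho_n)\overset{p}{\rightarrow}\Sigma_0^{-1}$, equivalently $X_n^{\ast}(\rho_n)^{-1}\overset{p}{\rightarrow}\Sigma_0$; together with $\hat{\Sigma}_n\overset{p}{\rightarrow}\Sigma_0$ from Proposition \ref{probCLT}, both covariances entering the Wasserstein distance concentrate at $\Sigma_0$.

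To conclude, observe that the inner supremum in (\ref{DRO_sol}) is strictly increasing in the constraint radius for any $X\succ 0$, so the Wasserstein constraint binds at the worst case: letting $\mathbb{Q}_n^{\ast}\sim\mathcal{N}(0,X_n^{\ast}(\rho_n)^{-1})$ denote the worst-case Gaussian, one has $\mathbb{W}_2(\hat{\mathbb{P}}_n,\mathbb{Q}_n^{\ast})=\rho_n$ almost surely. Continuity of the Gaussian--Wasserstein distance in its covariance arguments, combined with the preceding step, gives $\rho_n=\mathbb{W}_2(\hat{\mathbb{P}}_n,\mathbb{Q}_n^{\ast})\overset{p}{\rightarrow}\mathbb{W}_2(\mathcal{N}(0,\Sigma_0),\mathcal{N}(0,\Sigma_0))=0$, and since $\rho_n$ is deterministic this is convergence in the usual sense. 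The main obstacle is the coercivity step together with the identification of the binding constraint; both are standard but crucial for closing the loop between $X_n^{\ast}(\rho_n)$ and $\rho_n$ itself, since coercivity alone controls $X_n^{\ast}(\rho_n)$ rather than $\rho_n$.
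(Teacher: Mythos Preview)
Your argument is correct and takes a genuinely different route from the paper's proof. The paper proceeds by establishing uniform convergence of the map $\rho\mapsto\mathbb{E}[L(X_n^\ast(\rho),\Sigma_0)]$ to its population analogue $\rho\mapsto L(X^\ast(\rho),\Sigma_0)$ (using monotonicity in $\rho$ of the two components of the loss, a coercivity lower bound that confines $\rho_n$ to a compact interval, and uniform integrability of the relevant Wishart functionals), and then invokes convergence of minimizers. You bypass the uniform-convergence machinery entirely: you sandwich the optimal expected loss between $0$ and $\mathbb{E}[L(\hat\Sigma_n^{-1},\Sigma_0)]\to 0$, upgrade this to $X_n^\ast(\rho_n)^{-1}\overset{p}{\to}\Sigma_0$ via coercivity of the Stein loss, and close the loop through the geometric identity $\rho_n=\mathbb{W}_2\bigl(\mathcal{N}(0,\hat\Sigma_n),\mathcal{N}(0,X_n^\ast(\rho_n)^{-1})\bigr)$. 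This is more economical and exploits the DRO structure directly; the paper's route, in exchange, yields slightly more (uniform control of the expected loss in $\rho$, not just at the minimizer).

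One step deserves tighter justification: the identity $\mathbb{W}_2(\hat{\mathbb{P}}_n,\mathbb{Q}_n^\ast)=\rho_n$ with $\mathbb{Q}_n^\ast=\mathcal{N}(0,X_n^\ast(\rho_n)^{-1})$. Your binding-constraint observation only places the inner maximizer on the \emph{boundary} of the ball; identifying that maximizer with $(X_n^\ast(\rho_n))^{-1}$ requires either the saddle-point relation $X^\ast=(\Sigma^\ast)^{-1}$ from minimax duality, or a direct check. The direct check is short from Theorem~3.1 of \cite{nguyen2018distributionally}: since $X_n^\ast(\rho)$ shares eigenvectors with $\hat\Sigma_n$, one has $\mathbb{W}_2^2=\sum_i(\sqrt{1/x_i^\ast}-\sqrt{\hat\lambda_i})^2$, and the defining equation (\ref{gamma_eqn}) for $\gamma^\ast$ rearranges to show this sum equals $\rho^2$ exactly. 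With that verification in hand your proof is complete; in fact you do not even need $(X_n^\ast(\rho_n))^{-1}$ to be the worst-case covariance, only that it lies at Wasserstein distance $\rho_n$ from $\hat\Sigma_n$.
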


Using the previous technical results we are ready to provide the proof of
Theorem \ref{Thm_Main}.

\subsection{Development of Proof of Theorem \protect\ref{Thm_Main}}

The gradient of the Stein loss is given by
\begin{equation*}
h(X,\Sigma _{0})=\frac{\partial L(X,\Sigma _{0})}{\partial X}=-X^{-1}+\Sigma
_{0}.
\end{equation*}%
We claim that $\rho _{n}=0$ is not a minimizer. The derivative of loss
function with respect to $\rho $ evaluating at $\rho =0$ is
\begin{equation*}
\left. \frac{\partial L(X_{n}^{\ast }(\rho ),\Sigma _{0})}{\partial \rho }%
\right\vert _{\rho =0}=\left\langle -\hat{\Sigma}_{n}+\Sigma _{0},\hat{A}%
_{n}\right\rangle .
\end{equation*}%
And by Proposition \ref{large_zero}, we have
\begin{equation*}
\mathbb{E}\left\langle -\hat{\Sigma}_{n}+\Sigma _{0},\hat{A}%
_{n}\right\rangle =-\mathbb{E}\left\langle \hat{\Sigma}_{n}-\Sigma _{0},\hat{%
A}_{n}-A_{0}\right\rangle <0,
\end{equation*}%
which shows that $\rho _{n}=0$ is not a minimizer. Furthermore, we have $%
\lim_{\rho \rightarrow \infty }L(X_{n}^{\ast }(\rho _{n}),\Sigma
_{0})=+\infty $ (see, Proposition 3.5 in \cite{nguyen2018distributionally}).
Therefore, the optimal solution is an interior point, i.e., $\rho _{n}\in
(0,+\infty ).$ Since $\rho _{n}$ is chosen to minimize $\mathbb{E}\left[
L(X_{n}^{\ast }(\rho _{n}),\Sigma _{0})\right] $, we have that $\rho _{n}$
satisfies the first order condition
\begin{equation}
\mathbb{E}\left\langle h(X_{n}^{\ast }(\rho _{n}),\Sigma _{0}),\hat{A}%
_{n}+O(\rho _{n})\right\rangle =0.  \label{FOC}
\end{equation}%
By plugging (\ref{rho_decomp2}) into (\ref{FOC}), we have%
\begin{equation}
\mathbb{E}\left\langle h(X_{n}^{\ast }(\rho _{n}),\Sigma _{0}),\hat{A}%
_{n}+O(\rho _{n})\right\rangle =-\mathbb{E}\left\langle \hat{\Sigma}%
_{n}-\Sigma _{0}-\hat{\Sigma}_{n}\hat{A}_{n}\hat{\Sigma}_{n}\rho _{n}+O(\rho
_{n}^{2}),\hat{A}_{n}+O(\rho _{n})\right\rangle =0,  \label{imp_eqn}
\end{equation}%
which is equivalent to
\begin{equation}
\mathbb{E}\left[ \left\langle \hat{\Sigma}_{n}\hat{A}_{n}\hat{\Sigma}_{n},%
\hat{A}_{n}\right\rangle \right] \rho _{n}+O(\rho _{n}^{2})=\mathbb{E}%
\left\langle \hat{\Sigma}_{n}-\Sigma _{0},\hat{A}_{n}+O(\rho
_{n})\right\rangle .  \label{FOC2}
\end{equation}%
The validity of expanding the expectations follows by applying the uniform
integrability results of the upper and lower bounds in (\ref{remainder}) and
(\ref{remainder2}) underlying the proof of Lemma \ref{temp}.

Now, note that, also by Lemma \ref{temp},%
\begin{equation*}
\lim_{n\rightarrow \infty }\mathbb{E}\left[ \left\langle \hat{\Sigma}_{n}%
\hat{A}_{n}\hat{\Sigma}_{n},\hat{A}_{n}\right\rangle \right] =\left\langle
\Sigma _{0}A_{0}\Sigma _{0},A_{0}\right\rangle =4\text{tr}(\Sigma _{0}^{-2})/%
\text{tr}(\Sigma _{0}^{-1})>0.
\end{equation*}%
By multiplying $\sqrt{n}$ on both sides of (\ref{FOC2}) and by Slutsky's
lemma (Theorem 1.8.10 in \cite{lehmann2006theory}), we have%
\begin{equation*}
\lim_{n\rightarrow \infty }\sqrt{n}\left( \mathbb{E}\left[ \left\langle \hat{%
\Sigma}_{n}\hat{A}_{n}\hat{\Sigma}_{n},\hat{A}_{n}\right\rangle \right] \rho
_{n}+O(\rho _{n}^{2})\right) =\lim_{n\rightarrow \infty }\mathbb{E}%
\left\langle \sqrt{n}\left( \hat{\Sigma}_{n}-\Sigma _{0}\right) ,\hat{A}%
_{n}+O(\rho _{n})\right\rangle =\mathbb{E}\left\langle Z,A_{0}\right\rangle
=0.
\end{equation*}%
The last equality follows from $\mathbb{E}Z=0$ and $A_{0}$ being
deterministic. Therefore,%
\begin{equation*}
\lim_{n\rightarrow \infty }\sqrt{n}\rho _{n}=0.
\end{equation*}%
Furthermore, since $\mathbb{E}\left[ \hat{\Sigma}_{n}-\Sigma _{0}\right] =0$
for every $n,$ we have (once again by Lemma \ref{temp})
\begin{eqnarray*}
&&\lim_{n\rightarrow \infty }\mathbb{E}\left\langle n\left( \hat{\Sigma}%
_{n}-\Sigma _{0}\right) ,\hat{A}_{n}+O(\rho _{n})\right\rangle  \\
&=&\lim_{n\rightarrow \infty }\mathbb{E}\left\langle n\left( \hat{\Sigma}%
_{n}-\Sigma _{0}\right) ,\hat{A}_{n}-A_{0}+A_{0}+O(\rho _{n})\right\rangle
\\
&=&\lim_{n\rightarrow \infty }\left\langle n\mathbb{E}\left[ \hat{\Sigma}%
_{n}-\Sigma _{0}\right] ,A_{0}\right\rangle +\lim_{n\rightarrow \infty }%
\mathbb{E}\left\langle \sqrt{n}\left( \hat{\Sigma}_{n}-\Sigma _{0}\right) ,%
\sqrt{n}\left( \hat{A}_{n}-A_{0}\right) +O(\sqrt{n}\rho _{n})\right\rangle
\\
&=&\mathbb{E}\left\langle Z,Z_{A}\right\rangle .
\end{eqnarray*}%
By multiplying $n$ on both sides of (\ref{FOC2}), we have%
\begin{equation*}
\rho _{\ast }=\lim_{n\rightarrow \infty }n\rho _{n}=\frac{\mathbb{E}%
\left\langle Z,Z_{A}\right\rangle }{\left\langle \Sigma _{0}A_{0}\Sigma
_{0},A_{0}\right\rangle }=\mathbb{E}\left( \frac{4\text{tr}\left( \Sigma
_{0}^{-2}Z\Sigma _{0}^{-1}Z\right) }{\text{tr}(\Sigma _{0}^{-1})^{1/2}}-%
\frac{\text{tr}(Z\Sigma _{0}^{-2})^{2}}{\text{tr}(\Sigma _{0}^{-1})^{3/2}}%
\right) \frac{\text{tr}(\Sigma _{0}^{-1})}{4\text{tr}(\Sigma _{0}^{-2})}>0,
\end{equation*}%
which is the desired result.

\section{Numerical Experiments\label{Sec_Numerical}}

Here we provide various numerical experiments to provide an empirical
validation of our theory and the performance of the asymptotically optimal
choice of uncertainty size in finite samples.

The first example is in one dimension. The data is sampled from a normal
distribution, $N(0,\sigma _{0}^{2})$; i.e, $\Sigma _{0}=\sigma _{0}^{2}$ in
the real line. Therefore,
\begin{equation*}
A_{0}=-2\sigma _{0}^{-3}\text{, }\mathbb{E}\left\langle Z,Z_{A}\right\rangle
=6\sigma _{0}^{-1}.
\end{equation*}%
Theorem \ref{Thm_Main} indicates that
\begin{equation*}
\lim_{n\rightarrow \infty }n\rho _{n}=\frac{3}{2}\sigma _{0}.
\end{equation*}

In our numerical example we fix $\sigma _{0}^{2}=10$. We vary the number of
data points, $n$, ranging from 10 to 1000. For each $n$, we use $T=5000$
trials to compute empirically the optimal choice of $\rho =\rho _{n}$ in
order to minimize the empirical Stein loss. Furthermore, we reformulate the
limiting result as
\begin{equation*}
\rho _{n}=\frac{3}{2}\sigma _{0}/n\Leftrightarrow \log (\rho _{n})=-\log
(n)+\log \left( \frac{3}{2}\sigma _{0}\right) \text{.}
\end{equation*}%
We then perform a regression on $\log (\rho _{n})$ with respect to $\log (n)
$. Figure \ref{rho} gives the relationship between $\rho $ and $n$ and the
regression line. We can find that $n\rho _{n}$ is approximately equal to a
constant, which is validated by the top right plot. The plots on the left
show the qualitative behavior of $\rho _{n}$; the figure on the top left
shows a behavior consistent with a decrease of order $O\left( 1/n\right) $,
the bottom left plot shows that $n^{1/2}\rho _{n}$ still decreases to zero,
indicating that $\rho _{n}$ converges to zero faster than the square-root
rate. The regression statistics, corresponding to the regression plot shown
in the bottom right of the plot, are shown in Table \ref{tab} and $%
R^{2}=0.97 $.

The theoretical constant $\log \left( 1.5\cdot \sigma _{0}\right) =1.5568$
is very close to the empirical regression intercept $1.5525,$ while the
coefficient multiplying $-\log \left( n\right) $ is close to unity. Hence,
the empirical result matches perfectly with our theory.


\begin{figure}[!h]
\centering
\includegraphics[width=8cm]{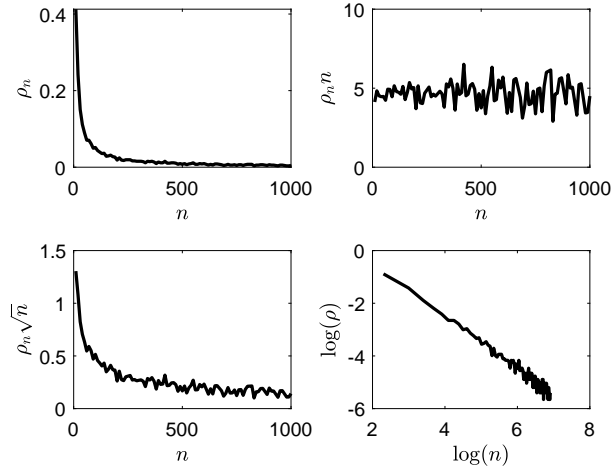}
\caption{$\protect\rho _{n}$ VS $n$ for 1-dimension normal distribution}
\label{rho}
\end{figure}

\begin{table}[!h]
\centering
{\small
\begin{tabular}{lcc}
\toprule & $\log(n)$ & constant \\
\midrule Coefficient & -1.0037 & 1.5525 \\
95\% Confidence interval & [-1.0387,-0.9687] & [1.3419,1.7631] \\
\bottomrule &  &
\end{tabular}%
}
\caption{Regression results for 1-dimension normal distribution}
\label{tab}
\end{table}

We provide additional examples involving higher dimensions. In the
subsequent examples, the data is sampled from a normal distribution $%
N(0,\Sigma _{0})$, where $\left( \Sigma _{0}\right) _{ij}=10\times
0.5^{|i-j|},$ $1\leq i,j\leq d$. We test the cases corresponding to $d=3$
and $d=5$ in the experiments. Due to computational constraints, we vary the
number of data points, $n$, ranging from 20 to 400. For each $n$, we use $%
T=100$ trials to compute empirically the optimal choice of uncertainty to
minimize the empirical Stein loss. Figures \ref{rho3} and \ref{rho5} show
the results for the 3-dimension and 5-dimension cases, respectively. Tables %
\ref{tab2} and \ref{tab3} give the regression statistics and $R^{2}=0.97$ in
both cases, and the performance is completely analogous to the one
dimensional case, thus empirically validating our theoretical results.

\begin{figure}[!h]
\centering
\includegraphics[width=8cm]{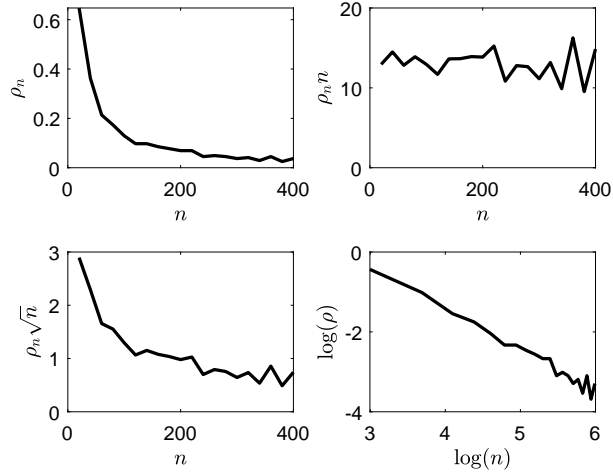}
\caption{$\protect\rho _{n}$ VS $n$ for 3-dimension normal distribution}
\label{rho3}
\end{figure}
\begin{table}[!h]
\centering
{\small
\begin{tabular}{lcc}
\toprule & $\log(n)$ & constant \\
\midrule Coefficient & -1.0340 & 2.7305 \\
95\% Confidence interval & [-1.1163,-0.9516] & [2.3045, 3.1565] \\
\bottomrule &  &
\end{tabular}%
}
\caption{Regression results for 3-dimension normal distribution}
\label{tab2}
\end{table}
\begin{figure}[!h]
\centering
\includegraphics[width=8cm]{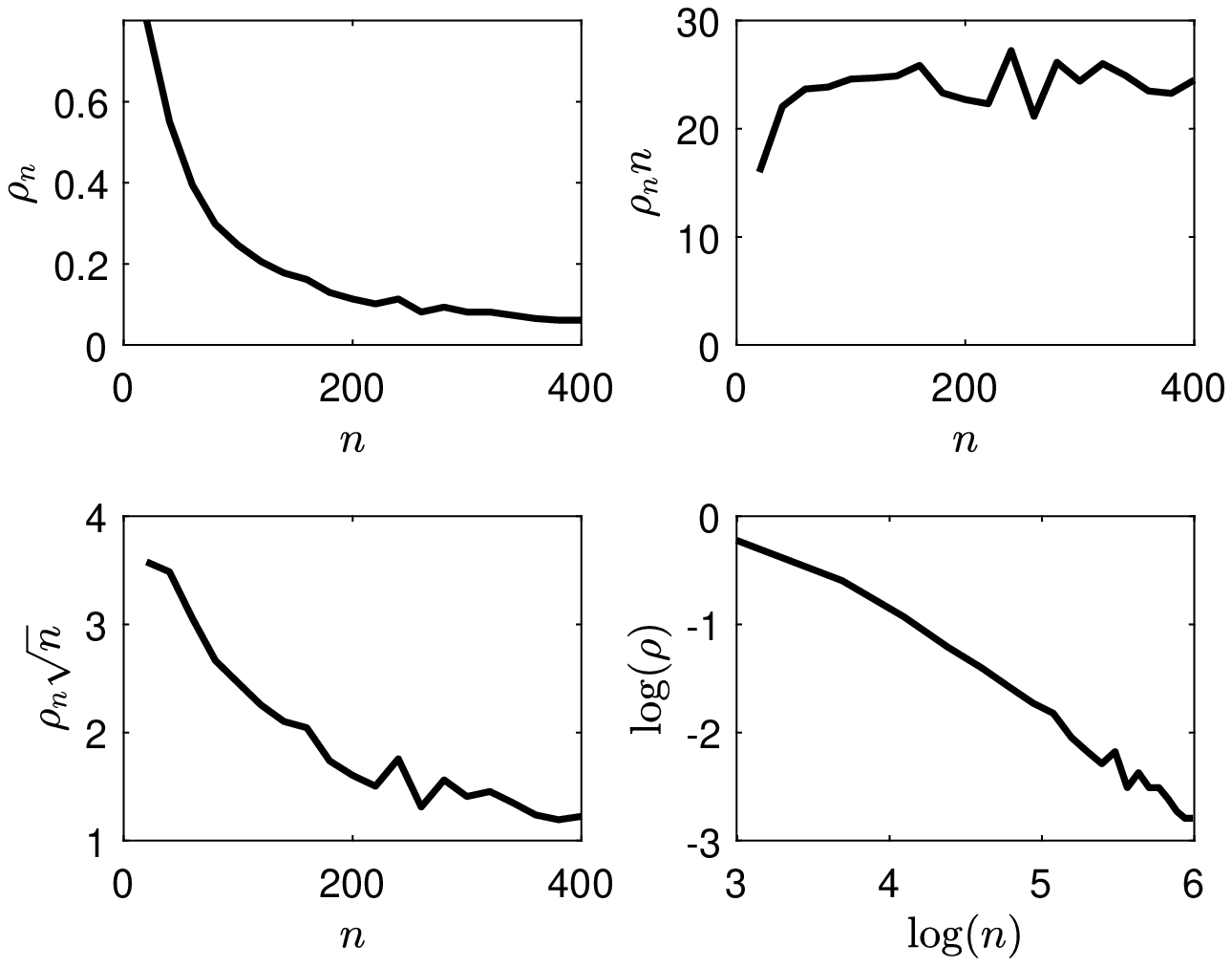}
\caption{$\protect\rho _{n}$ VS $n$ for 5-dimension normal distribution}
\label{rho5}
\end{figure}

\begin{table}[!h]
\centering
{\small
\begin{tabular}{lcc}
\toprule & $\log(n)$ & constant \\
\midrule Coefficient & -0.9177 & 2.7413 \\
95\% Confidence interval & [-0.9716 ,-0.8638] & [2.4625, 3.0201] \\
\bottomrule &  &
\end{tabular}%
}
\caption{Regression results for 5-dimension normal distribution}
\label{tab3}
\end{table}

\section*{Acknowledgements}

We gratefully acknowledge support from the following NSF grants 1915967, 1820942, 1838676 as well as DARPA award N660011824028.


\bibliographystyle{plain}
\bibliography{mybib}



\newpage \setcounter{section}{0} \setcounter{subsection}{0} %
\setcounter{equation}{0}
\renewcommand
{\thesection}{Appendix \Alph{section}} \renewcommand\thesubsection{%
\thesection.\arabic{subsection}} \renewcommand{\theequation}{\Alph{section}.%
\arabic{equation}}

\section{Proofs of Auxiliary Results}

\subsection{Proof of Lemma \protect\ref{lemma}}

We first restate a theorem in \cite{nguyen2018distributionally}.

\begin{theorem}[Theorem 3.1 in \protect\cite{nguyen2018distributionally}]
If $\rho >0$ and $\hat{\Sigma}_{n}$ admits the spectral decomposition $\hat{%
\Sigma}_{n}=\sum_{i=1}^{d}\hat{\lambda}_{i}\hat{v}_{i}\left( \hat{v}%
_{i}\right) ^{T}$ with eigenvalues $\hat{\lambda}_{i}$ and corresponding
orthonormal eigenvectors $\hat{v}_{i},$ $i\leq d,$ then the unique minimizer
of (\ref{DRO_sol}) is given by $X_{n}^{\ast }(\rho
)=\sum_{i=1}^{d}x_{i}^{\ast }\hat{v}_{i}\left( \hat{v}_{i}\right) ^{T},$
where
\begin{equation}
\hat{x}_{i}^{\ast }=\gamma ^{\ast }\left[ 1-\frac{1}{2}\left( \sqrt{\hat{%
\lambda}_{i}^{2}\left( \gamma ^{\ast }\right) ^{2}+4\hat{\lambda}_{i}\gamma
^{\ast }}-\hat{\lambda}_{i}\gamma ^{\ast }\right) \right] ,  \label{x_eqn}
\end{equation}%
and $\gamma ^{\ast }>0$ is the unique positive solution of the algebraic
equation
\begin{equation}
\left( \rho ^{2}-\frac{1}{2}\sum_{i=1}^{d}\hat{\lambda}_{i}\right) \gamma -d+%
\frac{1}{2}\sum_{i=1}^{d}\sqrt{\hat{\lambda}_{i}^{2}\gamma ^{2}+4\hat{\lambda%
}_{i}\gamma }=0.  \label{gamma_eqn}
\end{equation}
\end{theorem}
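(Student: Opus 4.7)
The plan is to reduce the matrix minimax problem in (\ref{DRO_sol}) to $d$ scalar problems by exploiting the explicit Gelbrich--Bures form of $\mathbb{W}_2$ between centered Gaussians. First, I would substitute the closed form
\[
\mathbb{W}_2^2(\mathcal{N}(0,\hat\Sigma_n), \mathcal{N}(0,\Sigma)) = \text{tr}(\hat\Sigma_n) + \text{tr}(\Sigma) - 2\,\text{tr}\!\left((\hat\Sigma_n^{1/2}\Sigma\hat\Sigma_n^{1/2})^{1/2}\right),
\]
and note that $\mathbb{E}^{\mathbb{Q}}[\langle \xi\xi^T, X\rangle] = \langle \Sigma, X\rangle$ whenever $\mathbb{Q} = \mathcal{N}(0,\Sigma)$. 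This recasts the inner stochastic problem as the deterministic minimax
\[
\min_{X \succ 0}\; \max_{\Sigma \succeq 0,\; \mathbb{W}_2^2(\hat\Sigma_n,\Sigma)\leq \rho^2}\; \left\{-\log\det X + \langle \Sigma, X\rangle\right\}.
\]

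Second, I would dualize the inner max by introducing a multiplier $\gamma \geq 0$ for the Wasserstein constraint; Slater's condition holds at $\Sigma = \hat\Sigma_n$ whenever $\rho > 0$, and the Bures term is concave in $\Sigma$, so strong duality applies. The key structural step is to argue that an optimal pair $(\Sigma^*, X^*)$ can be taken to commute with $\hat\Sigma_n$. I would do this by symmetrization: for any orthogonal $U$ with $U\hat\Sigma_n U^T = \hat\Sigma_n$, the pair $(UXU^T, U\Sigma U^T)$ has the same objective value and feasibility; Haar-averaging $(X^*, \Sigma^*)$ over this compact stabilizer, using convexity of $-\log\det X$ in $X$ and linearity of $\langle \Sigma, X\rangle$ together with convexity of the feasible set in $\Sigma$, yields optimizers invariant under every such $U$, and hence commuting with $\hat\Sigma_n$ (with the usual Schur-lemma care when $\hat\Sigma_n$ has repeated eigenvalues).

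With $X = \sum_i x_i \hat v_i \hat v_i^T$ and $\Sigma = \sum_i \sigma_i \hat v_i \hat v_i^T$, the Bures term collapses to $\sum_i \sqrt{\hat\lambda_i \sigma_i}$, so $\mathbb{W}_2^2$ becomes $\sum_i(\sqrt{\sigma_i}-\sqrt{\hat\lambda_i})^2$ and the problem decouples into $d$ scalar subproblems parametrized by $\gamma$. For each $i$, the first-order condition for $\sigma_i$ in the inner max yields $\sigma_i^* = \hat\lambda_i \gamma^2/(\gamma-x_i)^2$ in the interior regime $0 < x_i < \gamma$; substituting back gives an inner value $\hat\lambda_i x_i \gamma/(\gamma - x_i)$, and stationarity in $x_i$ reduces to $(\gamma - x_i)^2 = \hat\lambda_i \gamma^2 x_i$. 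Setting $y_i = \sqrt{x_i}$ produces a quadratic in $y_i$ whose positive root, after back-substitution into $x_i = y_i^2$, yields the formula (\ref{x_eqn}). Finally, substituting $\sigma_i^*$ into the binding constraint $\sum_i(\sqrt{\sigma_i^*}-\sqrt{\hat\lambda_i})^2 = \rho^2$ gives (\ref{gamma_eqn}); uniqueness of $\gamma^* > 0$ follows from strict monotonicity of the left-hand side of (\ref{gamma_eqn}) in $\gamma$ on $(0, \infty)$.

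The main obstacle I anticipate is the simultaneous-diagonalization reduction — establishing strong duality for the inner Wasserstein problem and rigorously executing the symmetrization/averaging argument (with appropriate handling of repeated eigenvalues of $\hat\Sigma_n$). Once that reduction is in place, the remaining derivation is a straightforward, if bookkeeping-heavy, scalar calculus exercise.
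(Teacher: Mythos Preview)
The paper you are working from does not prove this theorem. It is quoted verbatim from Nguyen, Kuhn, and Esfahani (2018) --- the sentence introducing it reads ``We first restate a theorem in \cite{nguyen2018distributionally}'' --- and it is used only as input to the Taylor expansions in Lemma~\ref{lemma}. There is therefore no proof in the present paper against which to compare your proposal.

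For what it is worth, your outline is sound and does recover the stated formulas. The Gelbrich expression for $\mathbb{W}_2$ between centered Gaussians, Lagrangian duality in the multiplier $\gamma$, the reduction to the eigenbasis of $\hat\Sigma_n$, and the scalar first-order conditions lead, after the algebra you sketch, exactly to (\ref{x_eqn}); the binding constraint $\sum_i x_i^\ast = \rho^2(\gamma^\ast)^2$ rearranges to (\ref{gamma_eqn}); and $f(\gamma):=(\rho^2-\tfrac12\sum_i\hat\lambda_i)\gamma - d + \tfrac12\sum_i\sqrt{\hat\lambda_i^2\gamma^2+4\hat\lambda_i\gamma}$ is strictly increasing on $(0,\infty)$ with $f(0^+)=-d$, giving uniqueness. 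One small refinement of your symmetrization step: rather than averaging $(X^*,\Sigma^*)$ jointly (the coupling term $\langle\Sigma,X\rangle$ is bilinear, so a joint average does not directly yield a bound), first observe that the outer value function $F(X)=-\log\det X+\sup_{\Sigma}\langle\Sigma,X\rangle$ is invariant under $X\mapsto UXU^T$ for $U$ in the stabilizer of $\hat\Sigma_n$ and strictly convex, so its unique minimizer already commutes with $\hat\Sigma_n$; then average only the inner maximizer $\Sigma^*$ against the fixed diagonal $X^*$. If you want to compare against an actual proof, consult \cite{nguyen2018distributionally} directly.
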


\begin{proof}[Proof of Lemma \protect\ref{lemma}.]
Since the underlying covariance matrix is invertible with probability one
when $n>d$, we have $\hat{\lambda}_{i}>0$ for $i=1,2,\ldots ,d$. We consider
the case $\rho \leq1$. Note that we have the following inequality,%
\begin{equation}
\sqrt{\hat{\lambda}_{i}^{2}\gamma ^{2}+4\hat{\lambda}_{i}\gamma }-\left(
\hat{\lambda}_{i}\gamma +2\right) =-\frac{4}{\sqrt{\hat{\lambda}%
_{i}^{2}\gamma ^{2}+4\hat{\lambda}_{i}\gamma }+\left( \hat{\lambda}%
_{i}\gamma +2\right) }\geq -\frac{2}{\hat{\lambda}_{i}\gamma }.
\label{first_inequ}
\end{equation}%
Then, (\ref{gamma_eqn}) gives us $\rho \gamma ^{\ast }\leq \sqrt{%
\sum_{i=1}^{d}\hat{\lambda}_{i}^{-1}}.$ On the other hand, we have
\begin{eqnarray}
\rho ^{2}\gamma ^{\ast } &=&\frac{1}{2}\sum_{i=1}^{d}\left( 2+\hat{\lambda}%
_{i}\gamma ^{\ast }-\sqrt{\hat{\lambda}_{i}^{2}\left( \gamma ^{\ast }\right)
^{2}+4\hat{\lambda}_{i}\gamma ^{\ast }}\right)  \notag \\
&=&\sum_{i=1}^{d}\frac{2}{\sqrt{\hat{\lambda}_{i}^{2}\left( \gamma ^{\ast
}\right) ^{2}+4\hat{\lambda}_{i}\left( \gamma ^{\ast }\right) ^{2}}+\left(
\hat{\lambda}_{i}\gamma ^{\ast }+2\right) }  \notag \\
&\geq &\sum_{i=1}^{d}\frac{1}{\hat{\lambda}_{i}\gamma ^{\ast }+2}
\label{large_gamma_inequ} \\
&\geq &d\frac{1}{\left( \max_{i}\hat{\lambda}_{i}\right) \gamma ^{\ast }+2}.
\notag
\end{eqnarray}%
Then, a basic property of the quadratic equation gives us that
\begin{equation*}
\gamma ^{\ast }\geq \frac{\sqrt{1+\left( \max_{i}\hat{\lambda}_{i}\right)
d/\rho ^{2}}-1}{\left( \max_{i}\hat{\lambda}_{i}\right) }\geq \frac{1}{4}%
\min \left\{ d/\rho ^{2},\frac{\sqrt{d}/\rho }{\sqrt{\max_{i}\hat{\lambda}%
_{i}}}\right\} .
\end{equation*}%
Furthermore, (\ref{large_gamma_inequ}) also shows that
\begin{eqnarray}
\rho ^{2}\gamma ^{\ast } &\geq &\left( \sum_{i=1}^{d}\frac{1}{\hat{\lambda}%
_{i}\gamma ^{\ast }}\right) \frac{1}{1+2/\left( \left( \min_{i}\hat{\lambda}%
_{i}\right) \gamma ^{\ast }\right) }  \notag \\
&\geq &\frac{1}{\gamma ^{\ast }}\left( \sum_{i=1}^{d}\hat{\lambda}%
_{i}^{-1}\right) \frac{\min \left\{ d/\rho ^{2},\frac{\sqrt{d}/\rho }{\sqrt{%
\max_{i}\hat{\lambda}_{i}}}\right\} }{\min \left\{ d/\rho ^{2},\frac{\sqrt{d}%
/\rho }{\sqrt{\max_{i}\hat{\lambda}_{i}}}\right\} +8/\left( \min_{i}\hat{%
\lambda}_{i}\right) }.  \label{gamma_bdd}
\end{eqnarray}%
By combining all of the above and noticing that $1+x\geq \sqrt{1+x}$ for $%
x\geq 0$, we have for $\rho \leq 1$
\begin{equation}
\frac{\rho }{\sqrt{\sum_{i=1}^{d}\hat{\lambda}_{i}^{-1}}}+\frac{\hat{M}_{n}}{%
\sqrt{\sum_{i=1}^{d}\hat{\lambda}_{i}^{-1}}}\rho ^{2}\geq \frac{1}{\gamma
^{\ast }}\geq \frac{1}{\sqrt{\sum_{i=1}^{d}\hat{\lambda}_{i}^{-1}}}\rho ,
\label{gamma}
\end{equation}%
where
\begin{equation*}
\hat{M}_{n}=\frac{8/\left( \min_{i}\hat{\lambda}_{i}\right) }{\min \left\{ d,%
\frac{\sqrt{d}}{\sqrt{\max_{i}\hat{\lambda}_{i}}}\right\} }.
\end{equation*}%
By plugging it to (\ref{gamma_eqn}), we have
\begin{equation*}
\frac{1}{x_{i}^{\ast }}=  \frac{\sqrt{\hat{\lambda}_{i}^{2}\left(
\gamma ^{\ast }\right) ^{2}+4\hat{\lambda}_{i}\gamma ^{\ast }}+\hat{\lambda}%
_{i}\gamma ^{\ast }+2}{2\gamma ^{\ast }}\leq \hat{\lambda}_{i}+2/\gamma ^{\ast }\leq \hat{%
\lambda}_{i}+\frac{2\rho }{\sqrt{\sum_{i=1}^{d}\hat{\lambda}_{i}^{-1}}}+%
\frac{2\hat{M}_{n}}{\sqrt{\sum_{i=1}^{d}\hat{\lambda}_{i}^{-1}}}\rho ^{2}.
\end{equation*}%
For the lower bound of $1/x_{i}^{\ast }$, we have%
\begin{equation*}
1/x_{i}^{\ast }=\frac{\sqrt{\hat{\lambda}_{i}^{2}\left( \gamma ^{\ast
}\right) ^{2}+4\hat{\lambda}_{i}\gamma ^{\ast }}+\hat{\lambda}_{i}\gamma
^{\ast }+2}{2\gamma ^{\ast }}\geq \frac{\sqrt{\hat{\lambda}_{i}^{2}\left(
\gamma ^{\ast }\right) ^{2}+4\hat{\lambda}_{i}\gamma ^{\ast }}}{\gamma
^{\ast }}.
\end{equation*}%
Then by (\ref{first_inequ}), we have for $\rho \leq 1$
\begin{eqnarray*}
\frac{\sqrt{\hat{\lambda}_{i}^{2}\left( \gamma ^{\ast }\right) ^{2}+4\hat{%
\lambda}_{i}\gamma ^{\ast }}}{\gamma ^{\ast }} &\geq &\hat{\lambda}%
_{i}+2/\gamma ^{\ast }-\frac{2}{\hat{\lambda}_{i}\left( \gamma ^{\ast
}\right) ^{2}} \\
&\geq &\hat{\lambda}_{i}+\frac{2\rho }{\sqrt{\sum_{i=1}^{d}\hat{\lambda}%
_{i}^{-1}}}-\frac{2\left( 1+\hat{M}_{n}\right) ^{2}}{\hat{\lambda}_{i}\left(
\sum_{i=1}^{d}\hat{\lambda}_{i}^{-1}\right) }\rho ^{2}
\end{eqnarray*}%
Therefore, we conclude that
\begin{equation*}
\frac{1}{x_{i}^{\ast }}=\hat{\lambda}_{i}+\frac{2\rho }{\sqrt{\sum_{i=1}^{d}%
\hat{\lambda}_{i}^{-1}}}+O(\rho ^{2}).
\end{equation*}%
and
\begin{equation*}
X_{n}^{\ast }(\rho )^{-1}=\hat{\Sigma}_{n}-\hat{\Sigma}_{n}\hat{A}_{n}\hat{%
\Sigma}_{n}\rho +O(\rho ^{2}),
\end{equation*}%
where
\begin{equation*}
\hat{A}_{n}=-\sum_{i=1}^{d}\frac{2\hat{v}_{i}\left( \hat{v}_{i}\right) ^{T}}{%
\hat{\lambda}_{i}^{2}\sqrt{\sum_{i=1}^{d}\hat{\lambda}_{i}^{-1}}}=-\frac{2}{%
\sqrt{\text{tr}(\hat{\Sigma}_{n}^{-1})}}\hat{\Sigma}_{n}^{-2}.
\end{equation*}%
Specifically, the remainder terms satisfy
\begin{equation*}
-\left( \frac{2\left( 1+\hat{M}_{n}\right) ^{2}}{\sum_{i=1}^{d}\hat{\lambda}%
_{i}^{-1}}\sum_{i=1}^{d}\frac{\hat{v}_{i}\left( \hat{v}_{i}\right) ^{T}}{%
\hat{\lambda}_{i}}\right) \rho ^{2}\preceq X_{n}^{\ast }(\rho )^{-1}-\hat{%
\Sigma}_{n}+\hat{\Sigma}_{n}\hat{A}_{n}\hat{\Sigma}_{n}\rho \preceq \left(
\frac{2\hat{M}_{n}}{\sqrt{\sum_{i=1}^{d}\hat{\lambda}_{i}^{-1}}}%
\sum_{i=1}^{d}\hat{v}_{i}\left( \hat{v}_{i}\right) ^{T}\right) \rho ^{2}.
\end{equation*}%
We complete the proof of (\ref{remainder2}).

For the the proof of (\ref{remainder}), note that (\ref{gamma_eqn})
indicates
\begin{eqnarray*}
\frac{d\gamma ^{\ast }}{d\rho } &=&\frac{-2\rho \gamma ^{\ast }}{\left( \rho
^{2}-\frac{1}{2}\sum_{i=1}^{d}\hat{\lambda}_{i}\right) +\frac{1}{2}%
\sum_{i=1}^{d}\hat{\lambda}_{i}\frac{\hat{\lambda}_{i}\gamma ^{\ast }+2}{%
\sqrt{\hat{\lambda}_{i}^{2}\left( \gamma ^{\ast }\right) ^{2}+4\hat{\lambda}%
_{i}\gamma ^{\ast }}}} \\
&=&-\frac{2\rho \gamma ^{\ast }}{\rho ^{2}+\sum_{i=1}^{d}\frac{2\hat{\lambda}%
_{i}}{\sqrt{\hat{\lambda}_{i}^{2}\left( \gamma ^{\ast }\right) ^{2}+4\hat{%
\lambda}_{i}\gamma ^{\ast }}\left( \hat{\lambda}_{i}\gamma ^{\ast }+2+\sqrt{%
\hat{\lambda}_{i}^{2}\left( \gamma ^{\ast }\right) ^{2}+4\hat{\lambda}%
_{i}\gamma ^{\ast }}\right) }}.
\end{eqnarray*}%
Since $\hat{\lambda}_{i}\gamma ^{\ast }\leq \sqrt{\hat{\lambda}%
_{i}^{2}\left( \gamma ^{\ast }\right) ^{2}+4\hat{\lambda}_{i}\gamma ^{\ast }}%
\leq \hat{\lambda}_{i}\gamma ^{\ast }+2$ and $\rho \gamma ^{\ast }\leq \sqrt{%
\sum_{i=1}^{d}\hat{\lambda}_{i}^{-1}},$ we have
\begin{equation*}
-\frac{2\rho \gamma ^{\ast }}{\rho ^{2}+\sum_{i=1}^{d}\frac{\hat{\lambda}_{i}%
}{\left( \hat{\lambda}_{i}\gamma ^{\ast }+2\right) ^{2}}}\leq \frac{\partial
\gamma ^{\ast }}{\partial \rho }\leq -\frac{2\rho \gamma ^{\ast }}{\rho
^{2}+\sum_{i=1}^{d}\frac{\hat{\lambda}_{i}}{\left( \hat{\lambda}_{i}\gamma
^{\ast }\right) ^{2}}}\leq -\frac{\rho \left( \gamma ^{\ast }\right) ^{3}}{%
\sum_{i=1}^{d}\hat{\lambda}_{i}^{-1}}.
\end{equation*}%
Then, by using the bound (\ref{gamma}), we further have
\begin{equation*}
-\frac{\rho \left( \gamma ^{\ast }\right) ^{3}}{\sum_{i=1}^{d}\hat{\lambda}%
_{i}^{-1}}\leq -\frac{\left( \gamma ^{\ast }\right) ^{2}}{\sqrt{%
\sum_{i=1}^{d}\hat{\lambda}_{i}^{-1}}}\left( 1-\hat{M}_{n}\rho \right) .
\end{equation*}%
Furthermore, the proof of Proposition 3.5 in \cite%
{nguyen2018distributionally} indicates that%
\begin{equation*}
\frac{\partial x_{i}}{\partial \gamma ^{\ast }}=1+\hat{\lambda}_{i}\gamma
^{\ast }-\frac{\hat{\lambda}_{i}^{2}\left( \gamma ^{\ast }\right) ^{2}+3\hat{%
\lambda}_{i}\gamma ^{\ast }}{\sqrt{\hat{\lambda}_{i}^{2}\left( \gamma ^{\ast
}\right) ^{2}+4\hat{\lambda}_{i}\gamma ^{\ast }}}.
\end{equation*}%
Let $z_{i}=\hat{\lambda}_{i}\gamma ^{\ast }$ for $i=1,2,\ldots ,d.$ We have%
\begin{equation*}
\frac{\partial x_{i}}{\partial \gamma ^{\ast }}=\frac{4z_{i}}{\sqrt{%
z_{i}^{2}+4z_{i}}\left( \left( 1+z_{i}\right) \sqrt{z_{i}^{2}+4z_{i}}%
+z_{i}^{2}+3z_{i}\right) }\in \left[ \frac{2z_{i}}{\left( z_{i}+2\right) ^{3}%
},\frac{2}{z_{i}^{2}}\right] .
\end{equation*}%
From (\ref{gamma_bdd}), we have%
\begin{equation}
\sum_{i=1}^{d}\frac{\hat{\lambda}_{i}}{\left( \hat{\lambda}_{i}\gamma ^{\ast
}+2\right) ^{2}}\geq \frac{1}{\left( \gamma ^{\ast }\right) ^{2}}\left(
\sum_{i=1}^{d}\hat{\lambda}_{i}^{-1}\right) \left( \frac{\min \left\{ d/\rho
^{2},\frac{\sqrt{d}/\rho }{\sqrt{\max_{i}\hat{\lambda}_{i}}}\right\} }{\min
\left\{ d/\rho ^{2},\frac{\sqrt{d}/\rho }{\sqrt{\max_{i}\hat{\lambda}_{i}}}%
\right\} +8/\left( \min_{i}\hat{\lambda}_{i}\right) }\right) ^{2}
\label{bdd_2_2}
\end{equation}%
and
\begin{equation*}
\frac{2z_{i}}{\left( z_{i}+2\right) ^{3}}\geq \frac{2}{\left( \hat{\lambda}%
_{i}\gamma ^{\ast }\right) ^{2}}\left( \frac{\min \left\{ d/\rho ^{2},\frac{%
\sqrt{d}/\rho }{\sqrt{\max_{i}\hat{\lambda}_{i}}}\right\} }{\min \left\{
d/\rho ^{2},\frac{\sqrt{d}/\rho }{\sqrt{\max_{i}\hat{\lambda}_{i}}}\right\}
+8/\left( \min_{i}\hat{\lambda}_{i}\right) }\right) ^{3}.
\end{equation*}%
Therefore, by combining (\ref{gamma_bdd}) and (\ref{bdd_2_2}), we have for $%
\rho \leq 1,$
\begin{eqnarray*}
\frac{\partial \gamma ^{\ast }}{\partial \rho } &\geq &-\frac{2\rho \left(
\gamma ^{\ast }\right) ^{3}}{\left( \sum_{i=1}^{d}\hat{\lambda}%
_{i}^{-1}\right) \left( \frac{\min \left\{ d/\rho ^{2},\frac{\sqrt{d}/\rho }{%
\sqrt{\max_{i}\hat{\lambda}_{i}}}\right\} }{\min \left\{ d/\rho ^{2},\frac{%
\sqrt{d}/\rho }{\sqrt{\max_{i}\hat{\lambda}_{i}}}\right\} +8/\left( \min_{i}%
\hat{\lambda}_{i}\right) }+\left( \frac{\min \left\{ d/\rho ^{2},\frac{\sqrt{%
d}/\rho }{\sqrt{\max_{i}\hat{\lambda}_{i}}}\right\} }{\min \left\{ d/\rho
^{2},\frac{\sqrt{d}/\rho }{\sqrt{\max_{i}\hat{\lambda}_{i}}}\right\}
+8/\left( \min_{i}\hat{\lambda}_{i}\right) }\right) ^{2}\right) } \\
&\geq &-\frac{\rho \left( \gamma ^{\ast }\right) ^{3}}{\left( \sum_{i=1}^{d}%
\hat{\lambda}_{i}^{-1}\right) }\left( 1+\hat{M}_{n}\rho \right) ^{2} \\
&\geq &-\frac{\rho \left( \gamma ^{\ast }\right) ^{3}}{\left( \sum_{i=1}^{d}%
\hat{\lambda}_{i}^{-1}\right) }\left( 1+\left( 2\hat{M}_{n}+\hat{M}%
_{n}^{2}\right) \rho \right) .
\end{eqnarray*}%
Similarly, we have for $\rho \leq 1,$%
\begin{equation*}
\frac{2z_{i}}{\left( z_{i}+2\right) ^{3}}\geq \frac{2}{\left( \hat{\lambda}%
_{i}\gamma ^{\ast }\right) ^{2}}\left( 1-\hat{M}_{n}\rho \right) ^{3}\geq
\frac{2}{\left( \hat{\lambda}_{i}\gamma ^{\ast }\right) ^{2}}\left( 1-\left(
\hat{M}_{n}^{3}+3\hat{M}_{n}\right) \rho \right) .
\end{equation*}%
Finally,$\ $by combining all of the above together and the chain rule, we
have%
\begin{equation*}
-\frac{2}{\hat{\lambda}_{i}^{2}}\frac{\rho \gamma ^{\ast }}{\left(
\sum_{i=1}^{d}\hat{\lambda}_{i}^{-1}\right) }\left( 1+\left( 2\hat{M}_{n}+%
\hat{M}_{n}^{2}\right) \rho \right) \leq \frac{\partial x_{i}}{\partial \rho
}\leq -\frac{2}{\hat{\lambda}_{i}^{2}\sqrt{\sum_{i=1}^{d}\hat{\lambda}%
_{i}^{-1}}}\left( 1-\hat{M}_{n}\rho \right) \left( 1-\left( \hat{M}_{n}^{3}+3%
\hat{M}_{n}\right) \rho \right) .
\end{equation*}%
After simplification, we have%
\begin{equation*}
-\frac{\left( 4\hat{M}_{n}+2\hat{M}_{n}^{2}\right) }{\hat{\lambda}_{i}^{2}%
\sqrt{\sum_{i=1}^{d}\hat{\lambda}_{i}^{-1}}}\rho \leq \frac{\partial x_{i}}{%
\partial \rho }+\frac{2}{\hat{\lambda}_{i}^{2}\sqrt{\sum_{i=1}^{d}\hat{%
\lambda}_{i}^{-1}}}\leq \frac{\left( 2\hat{M}_{n}^{3}+8\hat{M}_{n}\right) }{%
\hat{\lambda}_{i}^{2}\sqrt{\sum_{i=1}^{d}\hat{\lambda}_{i}^{-1}}}\rho .
\end{equation*}%
Furthermore, we have
\begin{equation*}
-\left( \frac{4\hat{M}_{n}+2\hat{M}_{n}^{2}}{\sqrt{\sum_{i=1}^{d}\hat{\lambda%
}_{i}^{-1}}}\sum_{i=1}^{d}\frac{\hat{v}_{i}\left( \hat{v}_{i}\right) ^{T}}{%
\hat{\lambda}_{i}^{2}}\right) \rho \preceq \frac{\partial X_{n}^{\ast }(\rho
)}{\partial \rho }-\hat{A}_{n}\preceq \left( \frac{2\hat{M}_{n}^{3}+8\hat{M}%
_{n}}{\sqrt{\sum_{i=1}^{d}\hat{\lambda}_{i}^{-1}}}\sum_{i=1}^{d}\frac{\hat{v}%
_{i}\left( \hat{v}_{i}\right) ^{T}}{\hat{\lambda}_{i}^{2}}\right) \rho .
\end{equation*}%
This completes the proof.
\end{proof}

\subsection{Proof of Proposition \protect\ref{probCLT}}

{\color{blue} }

\begin{proof}[Proof of (1).]
The proof follows from the standard central limit theorem (CLT).
\end{proof}

\begin{proof}[Proof of (2).]
Since $\hat{\Sigma}_{n}$ is the average of i.i.d copies $\xi _{i}\xi
_{i}^{T},$ the result follows by CLT.
\end{proof}

\begin{proof}[Proof of (3).]
 The first statement follows from the continuous mapping
theorem and $\hat{\Sigma}_{n}\overset{p}{\rightarrow }\Sigma _{0}.$ Let $%
f(\Sigma )=-2\left( \text{tr}(\Sigma ^{-1})\right) ^{-1/2}\Sigma ^{-2}$,
where $\Sigma $ is positive-definite matrix. We now expand $f\left( \Sigma
+hA\right) $ for any matrix $A$ as the scalar $h>0$ tends to zero to obtain
a representation for the gradient of $f(\Sigma )$, $Df(\Sigma )$. This
expansion yields%
\begin{eqnarray*}
f\left( \Sigma +hA\right) &=&-2\left( \text{tr}(\left( \Sigma +hA\right)
^{-1})\right) ^{-1/2}\left( \Sigma +hA\right) ^{-2} \\
&=&-2\left( \text{tr}(\Sigma ^{-1})-\text{tr}\left( h\Sigma ^{-1}A\Sigma
^{-1}\right) +o\left( h\right) \right) ^{-1/2}\left( \left( I+h\Sigma
^{-1}A\right) ^{-1}\Sigma ^{-1}\right) ^{2} \\
&=&-2\text{tr}(\Sigma ^{-1})^{-1/2}\left( 1-h\frac{\text{tr}\left( \Sigma
^{-1}A\Sigma ^{-1}\right) }{\text{tr}(\Sigma ^{-1})}\right) ^{-1/2}\left(
\Sigma ^{-2}-h\Sigma ^{-1}A\Sigma ^{-2}-h\Sigma ^{-2}A\Sigma ^{-1}\right)
+o\left( h\right) \\
&=&-2\text{tr}(\Sigma ^{-1})^{-1/2}\left( 1+h\frac{\text{tr}\left( \Sigma
^{-1}A\Sigma ^{-1}\right) }{2\text{tr}(\Sigma ^{-1})}\right) \left( \Sigma
^{-2}-h\Sigma ^{-1}A\Sigma ^{-2}-h\Sigma ^{-2}A\Sigma ^{-1}\right) +o\left(
h\right) \\
&=&-2\text{tr}(\Sigma ^{-1})^{-1/2}\left( \Sigma ^{-2}+h\frac{\text{tr}%
\left( \Sigma ^{-1}A\Sigma ^{-1}\right) \Sigma ^{-2}}{2\text{tr}(\Sigma
^{-1})}-h\Sigma ^{-1}A\Sigma ^{-2}-h\Sigma ^{-2}A\Sigma ^{-1}\right)
+o\left( h\right) \\
&=&f\left( \Sigma \right) -h\frac{\text{tr}\left( \Sigma ^{-1}A\Sigma
^{-1}\right) \Sigma ^{-2}}{\text{tr}(\Sigma ^{-1})^{3/2}}+2h\frac{\Sigma
^{-1}A\Sigma ^{-2}+\Sigma ^{-2}A\Sigma ^{-1}}{\text{tr}(\Sigma ^{-1})^{1/2}}%
+o\left( h\right) ,
\end{eqnarray*}%
which, in turn, results in the linear operator satisfying for any $A\in
\mathbb{R}^{d\times d}$%
\begin{equation}
Df(\Sigma )A=-\frac{\text{tr}\left( \Sigma ^{-1}A\Sigma ^{-1}\right) \Sigma
^{-2}}{\text{tr}(\Sigma ^{-1})^{3/2}}+2\frac{\Sigma ^{-1}A\Sigma
^{-2}+\Sigma ^{-2}A\Sigma ^{-1}}{\text{tr}(\Sigma ^{-1})^{1/2}}.  \label{Df}
\end{equation}%
After applying the delta method, we have the desired result.
\end{proof}

\subsection{Proof of Proposition \protect\ref{large_zero}}

We first note the following elementary result, which is standard in matrix
algebra.

\begin{lemma}
\label{trace_ineqn}For any $d\times d$ matrices $A,B$ (real valued) we have%
\begin{equation*}
\text{tr}(A^{T}A)\text{tr}(B^{T}B)\geq \text{tr}(A^{T}B)^{2}=\left\vert
\left\langle A,B\right\rangle \right\vert ^{2},
\end{equation*}%
where strict inequality holds unless $A$ is a multiple of $B$.
\end{lemma}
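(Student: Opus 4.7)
The plan is to observe that the map $(A,B)\mapsto \text{tr}(A^{T}B)$ is exactly the Frobenius (Hilbert--Schmidt) inner product on the real vector space $\mathbb{R}^{d\times d}$, and then invoke the Cauchy--Schwarz inequality for that inner product. So the lemma is just Cauchy--Schwarz in disguise, together with its standard equality characterization.

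Concretely, I would first identify $\mathbb{R}^{d\times d}$ with $\mathbb{R}^{d^{2}}$ via the $\text{vec}$ operation, stacking columns into a single vector. Under this identification one checks directly that $\text{tr}(A^{T}B)=\sum_{i,j}A_{ij}B_{ij}=\text{vec}(A)^{T}\text{vec}(B)$, which is the ordinary Euclidean inner product on $\mathbb{R}^{d^{2}}$. In particular $\text{tr}(A^{T}A)=\|\text{vec}(A)\|_{2}^{2}\geq 0$, with equality iff $A=0$, so $\langle \cdot,\cdot\rangle$ really is a genuine inner product (bilinearity and symmetry are immediate from linearity of the trace and from $\text{tr}(A^{T}B)=\text{tr}(B^{T}A)$ over the reals).

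Once this identification is in place, the desired inequality
\[
\text{tr}(A^{T}A)\,\text{tr}(B^{T}B)\geq \text{tr}(A^{T}B)^{2}
\]
is the standard Cauchy--Schwarz inequality applied to $\text{vec}(A)$ and $\text{vec}(B)$ in $\mathbb{R}^{d^{2}}$. For the equality case, Cauchy--Schwarz in a real inner product space is tight precisely when the two vectors are linearly dependent; that is, $\text{vec}(A)=c\,\text{vec}(B)$ for some scalar $c\in\mathbb{R}$ (or $B=0$), which is equivalent to $A=cB$. Hence strict inequality holds whenever $A$ is not a scalar multiple of $B$.

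I do not expect any real obstacle: the only thing to be slightly careful about is that the proof uses that we are over $\mathbb{R}$, so that $\text{tr}(A^{T}B)$ is symmetric in $A$ and $B$ and the equality case of Cauchy--Schwarz takes its familiar ``scalar multiple'' form. If one preferred to avoid the $\text{vec}$ reformulation, an equally short alternative is to expand $\text{tr}((A-tB)^{T}(A-tB))\geq 0$ as a nonnegative quadratic in $t\in\mathbb{R}$ and read off the discriminant condition, which gives the inequality and the equality case in one shot.
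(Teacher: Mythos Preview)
Your proposal is correct and is essentially the same as the paper's proof: both rewrite $\text{tr}(A^{T}B)=\sum_{i,j}A_{ij}B_{ij}$ and apply the ordinary Cauchy--Schwarz inequality in $\mathbb{R}^{d^{2}}$. If anything, your version is slightly more complete, since you also spell out the equality case, which the paper's proof leaves implicit.
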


\begin{proof}[Proof of Lemma \protect\ref{trace_ineqn}]
By the Cauchy-Schwarz inequality, we have%
\begin{equation*}
\text{tr}(A^{T}A)\text{tr}(B^{T}B)=\left( \sum_{i,j}A_{ij}^{2}\right) \left(
\sum_{i,j}B_{ij}^{2}\right) \geq \left( \sum_{i,j}A_{ij}B_{ij}\right)
^{2}=\left\vert \left\langle A,B\right\rangle \right\vert ^{2}.
\end{equation*}
\end{proof}

Now we proceed with the proof of Proposition \ref{large_zero}.

\begin{proof}[Proof of (1)]
It suffices to show that $\left\langle Z,Z_{A}\right\rangle \geq 0$ with
probability one and that $\left\langle Z,Z_{A}\right\rangle >0$ with
positive probability. Note that%
\begin{equation*}
\left\langle Z,Z_{A}\right\rangle =-\frac{\text{tr}(Z\Sigma _{0}^{-2})^{2}}{%
\text{tr}(\Sigma _{0}^{-1})^{3/2}}+\frac{4\text{tr}\left( \Sigma
_{0}^{-1}Z\Sigma _{0}^{-2}Z\right) }{\text{tr}(\Sigma _{0}^{-1})^{1/2}}.
\end{equation*}%
We will show that%
\begin{equation}
\text{tr}(\Sigma _{0}^{-1})\text{tr}\left( \Sigma _{0}^{-1}Z\Sigma
_{0}^{-2}Z\right) \geq \text{tr}(Z\Sigma _{0}^{-2})^{2}  \label{matrix_inequ}
\end{equation}%
follows from Lemma \ref{trace_ineqn}. This implies that $\left\langle
Z,Z_{A}\right\rangle \geq 0$. The equality holds if and only
if there exists $a\geq 0$ such that {$Z\Sigma _{0}^{-2}Z=aI,$ which is
equivalent to }$Z=\sqrt{a}\Sigma _{0}${. We know that }$Z\neq \sqrt{a}\Sigma
_{0}$ with probability one. Thus, $\left\langle Z,Z_{A}\right\rangle >0$
with probability one.

To show (\ref{matrix_inequ}), we use the Polar factorization (see, for
example, Chapter 4.2 in \cite{golub2012matrix}) for positive definite
matrices. That is, we write $\Sigma _{0}^{1/2}\Sigma _{0}^{1/2}=\Sigma _{0}$%
, where $\Sigma _{0}^{1/2}$ is a symmetric positive definite matrix. Note
that we can write
\begin{equation*}
Z=\Sigma _{0}^{1/2}W\Sigma _{0}^{1/2},
\end{equation*}%
where $W=\Sigma _{0}^{-1/2}Z\Sigma _{0}^{-1/2}$ is a symmetric matrix. To
recover the matrices $A$ and $B$, we let%
\begin{equation*}
A=\Sigma _{0}^{-1/2}\text{, }S=\Sigma _{0}\text{ and }B=WS^{-1/2}.
\end{equation*}%
Note that%
\begin{eqnarray*}
&&\text{tr}\left( \Sigma _{0}^{-1}Z\Sigma _{0}^{-2}Z\right) =\text{tr}\left(
Z\Sigma _{0}^{-1}\cdot \Sigma _{0}^{-1}Z\Sigma _{0}^{-1}\right) \\
&=&\text{tr}\left( S^{1/2}WS^{1/2}S^{-1/2}S^{-1/2}\cdot
S^{-1/2}S^{-1/2}\left( S^{1/2}WS^{1/2}\right) S^{-1/2}S^{-1/2}\right) \\
&=&\text{tr}\left( S^{1/2}WS^{-1/2}\cdot S^{-1/2}WS^{-1/2}\right) =\text{tr}%
\left( S^{-1/2}WWS^{-1/2}\right) =\text{tr}\left( B ^{T}B\right) .
\end{eqnarray*}

Therefore, this verifies that the choice of $B$ is consistent with the use
of Lemma \ref{trace_ineqn}. Clearly, $AA^{T}=\Sigma _{0}^{-1}$, thus making
this choice also consistent with Lemma \ref{trace_ineqn}. Finally, we have
that%
\begin{eqnarray*}
\text{tr}(Z\Sigma _{0}^{-2}) &=&\text{tr}(S^{1/2}WS^{1/2}S^{-1/2}S^{-1/2}%
\Sigma _{0}^{-1}) \\
&=&\text{tr}(S^{1/2}WS^{-1/2}S^{-1/2}S^{-1/2}) \\
&=&\text{tr}(WS^{-1/2}S^{-1/2})=\text{tr}(S^{-1/2}S^{-1/2}W)=\text{tr}\left(
A^{T}B\right) .
\end{eqnarray*}%
The result then follows.
\end{proof}

{\color{blue} }

\begin{proof}[Proof of (2):]
\begin{eqnarray}
\left\langle \hat{\Sigma}_{n}-\Sigma _{0},\hat{A}_{n}-A_{0}\right\rangle
&=&-2\left\langle \hat{\Sigma}_{n}-\Sigma _{0},\frac{1}{\sqrt{\text{tr}(\hat{%
\Sigma}_{n}^{-1})}}\hat{\Sigma}_{n}^{-2}-\frac{1}{\sqrt{\text{tr}(\Sigma
_{0}^{-1})}}\Sigma _{0}^{-2}\right\rangle  \label{2_inequ} \\
&=&2\left( -\sqrt{\text{tr}(\hat{\Sigma}_{n}^{-1})}-\sqrt{\text{tr}(\Sigma
_{0})}+\frac{\text{tr}(\hat{\Sigma}_{n}^{-1}\Sigma _{0}\hat{\Sigma}_{n}^{-1})%
}{\sqrt{\text{tr}(\hat{\Sigma}_{n}^{-1})}}+\frac{\text{tr}(\Sigma _{0}^{-1}%
\hat{\Sigma}_{n}\Sigma _{0}^{-1})}{\sqrt{\text{tr}(\Sigma _{0}^{-1})}}%
\right) .  \notag
\end{eqnarray}%
By Lemma (\ref{trace_ineqn}) and similar arguments with (1), we have
\begin{equation}
\text{tr}(\hat{\Sigma}_{n}^{-1}\Sigma _{0}\hat{\Sigma}_{n}^{-1})\geq \frac{%
\text{tr}(\hat{\Sigma}_{n}^{-1})^{2}}{\text{tr}(\Sigma _{0}^{-1})}\text{ and
tr}(\Sigma _{0}^{-1}\hat{\Sigma}_{n}\Sigma _{0}^{-1})\geq \frac{\text{tr}%
(\Sigma _{0}^{-1})^{2}}{\text{tr}(\hat{\Sigma}_{n}^{-1})}.
\label{norm_inequ}
\end{equation}%
By plugging (\ref{norm_inequ}) into (\ref{2_inequ}), we have%
\begin{equation*}
\left\langle \hat{\Sigma}_{n}-\Sigma _{0},\hat{A}_{n}-A_{0}\right\rangle
\geq 2\left( -\sqrt{\text{tr}(\hat{\Sigma}_{n}^{-1})}-\sqrt{\text{tr}(\Sigma
_{0})}+\frac{\text{tr}(\hat{\Sigma}_{n}^{-1})^{3/2}}{\text{tr}(\Sigma
_{0}^{-1})}+\frac{\text{tr}(\Sigma _{0}^{-1})^{3/2}}{\text{tr}(\hat{\Sigma}%
_{n}^{-1})}\right) .
\end{equation*}%
Consider the function $g:\mathbb{R}_{+}\times \mathbb{R}_{+}\rightarrow \mathbb{R%
}$,
\begin{equation*}
g(a,b)=-a-b+\frac{b^{3}}{a^{2}}+\frac{a^{3}}{b^{2}}=\frac{%
(a^{3}-b^{3})(a^{2}-b^{2})}{a^{2}b^{2}}\geq 0,
\end{equation*}%
and the equality holds if $a=b.$ Since $\sqrt{\text{tr}(\hat{\Sigma}%
_{n}^{-1})}=\sqrt{\text{tr}(\Sigma _{0})}$ with probability zero, the
desired result follows.
\end{proof}

\subsection{Proof of Lemma \protect\ref{temp}}

We first collect a few results from linear algebra (see, for example,
equation (2.3.3) and (2.3.7) in \cite{golub2012matrix}).

\begin{lemma}
\label{Lem_Aux_UI1}For any $d\times d$ matrix $A$ (real valued) we define $%
\left\Vert A\right\Vert _{F}^{2}=\left\langle A,A\right\rangle = \mathrm{tr}
\left( A^{T}A\right) $ (the Frobenius norm) and let $\left\Vert A\right\Vert
_{2}^{2}=\left\vert \lambda _{\max }\left( A^{T}A\right) \right\vert $
(where $\lambda _{\max }\left( B\right) $ is the eigenvalue of largest
modulus of the matrix $B$). Then, for any $A,B$ matrices of size $d\times d$
with real valued elements we have
\begin{equation*}
\left\Vert AB\right\Vert _{F}\leq \left\Vert A\right\Vert _{F}\left\Vert
B\right\Vert _{F},\text{ \ }\left\Vert B\right\Vert _{2}\leq \left\Vert
B\right\Vert _{F}.
\end{equation*}
\end{lemma}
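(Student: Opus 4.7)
The plan is to deduce both inequalities directly from the coordinate definitions; no deep machinery is required, which is probably why the paper simply cites them. I would treat the two claims in order.

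For the submultiplicative Frobenius inequality, I would write out $(AB)_{ij} = \sum_{k} A_{ik} B_{kj}$ and apply Cauchy--Schwarz entrywise to get $(AB)_{ij}^{2} \leq \bigl(\sum_{k} A_{ik}^{2}\bigr)\bigl(\sum_{k} B_{kj}^{2}\bigr)$. Summing over $i$ and $j$ the two factors decouple, so
$$
\|AB\|_{F}^{2} \;=\; \sum_{i,j} (AB)_{ij}^{2} \;\leq\; \Bigl(\sum_{i,k} A_{ik}^{2}\Bigr)\Bigl(\sum_{k,j} B_{kj}^{2}\Bigr) \;=\; \|A\|_{F}^{2}\,\|B\|_{F}^{2},
$$
giving the first bound. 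An equivalent route, if I prefer an operator flavor, is to write $\|AB\|_{F}^{2} = \sum_{j} \|A(Be_{j})\|_{2}^{2}$, bound each term by $\|A\|_{2}^{2}\|Be_{j}\|_{2}^{2}$ (standard), and then use $\|A\|_{2} \leq \|A\|_{F}$ once it is established.

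For the spectral-versus-Frobenius comparison, I would use that $B^{T}B$ is symmetric and positive semidefinite, so its eigenvalues satisfy $\mu_{1} \geq \mu_{2} \geq \cdots \geq \mu_{d} \geq 0$. By definition $\|B\|_{2}^{2} = |\lambda_{\max}(B^{T}B)| = \mu_{1}$ and $\|B\|_{F}^{2} = \mathrm{tr}(B^{T}B) = \sum_{i} \mu_{i}$. Since every $\mu_{i}$ is nonnegative, $\mu_{1} \leq \sum_{i} \mu_{i}$, hence $\|B\|_{2} \leq \|B\|_{F}$.

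There is essentially no obstacle here; both facts are classical and the reference to Golub--Van Loan provides an external justification. The only mild subtlety to flag is that $\|B\|_{2}$ is defined in the lemma via the eigenvalue of largest modulus of $B^{T}B$ (not of $B$ itself), which is exactly the spectral/operator norm of $B$ and is nonnegative, so no absolute-value issues arise when comparing with the nonnegative trace.
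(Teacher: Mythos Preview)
Your proof is correct; both arguments are the standard elementary ones. The paper itself does not prove this lemma at all but simply cites Golub--Van Loan, so your write-up is strictly more detailed than what appears there.
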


In addition, we have the following properties of the distribution of $\hat{%
\Sigma}_{n}$, which follows the Wishart law (see, for example, Theorem
13.3.2 in \cite{anderson2003introduction}).

\begin{lemma}
\label{Lem_Aux_UI2}Assume $n>d.$ Let us write $\xi _{i}=C\zeta _{i}$ where $%
C\in \mathbb{R}^{d\times d}$ and $CC^{T}=\Sigma _{0}$ and put%
\begin{equation*}
S_{n}=C\left( \sum_{i=1}^{n}\zeta _{i}\zeta _{i}^{T}\right) C^{T}.
\end{equation*}%
Note that $\hat{\Sigma}_{n}=S_{n}/n$. Then, $S_{n}$ follows Wishart
distribution with parameters $d,n$ and $\Sigma _{0}$ (denoted as $%
W_{d}\left( n,\Sigma _{0}\right) $). Equivalently, $W=C^{-1}S_{n}\left(
C^{T}\right) ^{-1}$ is distributed $W_{d}\left( n,I\right) ,$ where $I$
denotes the $d\times d$ identity matrix. Moreover, the eigenvalue
distribution of $W$ satisfies
\begin{eqnarray*}
&&f_{w_{\left( 1\right) },...,w_{\left( d\right) }}\left(
w_{1},...,w_{d}\right) \\
&=&c_{d}\prod\limits_{i=1}^{d}\frac{\exp \left( -w_{i}/2\right) }{%
2^{n/2}\Gamma \left( \left( n-i+1\right) /2\right) }w_{i}^{\left(
n-d-1\right) /2}\prod\limits_{j>i}\left( w_{j}-w_{i}\right) \mathbb{I}\left(
0<w_{1}<...<w_{d}\right) .
\end{eqnarray*}%
where $\Gamma (\cdot )$ is the gamma function, $c_{d}$ is a constant
independent of $n$, and $\mathbb{I}(\cdot)$ is the indicator function.
\end{lemma}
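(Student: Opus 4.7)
The plan is to treat this as a standard computation for the Wishart distribution, so I would split the proof into the three assertions and dispatch each one.

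First, for the distribution of $S_n$, I would simply recall the definition: if $\xi_1,\ldots,\xi_n$ are i.i.d.\ $\mathcal{N}(0,\Sigma_0)$ with $n > d$, then by definition $S_n = \sum_{i=1}^n \xi_i \xi_i^T \sim W_d(n,\Sigma_0)$. No real work is required here. For the second assertion, I would use that $\zeta_i = C^{-1}\xi_i \sim \mathcal{N}(0, C^{-1}\Sigma_0 (C^{-1})^T) = \mathcal{N}(0, I)$ are i.i.d., and therefore $W = \sum_{i=1}^n \zeta_i \zeta_i^T = C^{-1} S_n (C^T)^{-1}$ is by definition $W_d(n, I)$. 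This step is a one-line linear-algebra check using $CC^T = \Sigma_0$.

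The third assertion, the joint density of the ordered eigenvalues of $W \sim W_d(n,I)$, is where essentially all of the substantive content lies; fortunately it is a classical result, so I would simply invoke it (as the paper does) via Theorem 13.3.2 of \cite{anderson2003introduction}. If I had to reproduce the derivation, the plan would be: start with the density of $W$ on the cone of positive-definite symmetric matrices, proportional to $\det(W)^{(n-d-1)/2}\exp(-\mathrm{tr}(W)/2)$; decompose $W = Q\,\mathrm{diag}(w_1,\ldots,w_d)\,Q^T$ with $Q$ orthogonal; apply the standard change-of-variables formula whose Jacobian for the map from matrix entries to eigenvalues and eigenvector parameters produces the Vandermonde-type factor $\prod_{j>i}(w_j - w_i)$; integrate out the orthogonal factor $Q$ over $O(d)$, which contributes a constant $c_d$ independent of $n$; and enforce the ordering $0 < w_1 < \cdots < w_d$ to collect the constants into the $\Gamma((n-i+1)/2)$ denominators.

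The main obstacle, if one insists on a self-contained derivation, is the Jacobian computation for the spectral decomposition; this is the nontrivial ingredient that produces the Vandermonde factor. Since the result is entirely standard and is used only as a technical input for uniform integrability arguments elsewhere in the paper, the sensible and efficient plan is to quote Anderson's theorem rather than rederive it, noting only that the transformation $W = C^{-1}S_n(C^T)^{-1}$ reduces our case to the white Wishart $W_d(n,I)$ for which the density is tabulated.
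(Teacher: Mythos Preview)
Your proposal is correct and matches the paper's approach exactly: the paper does not prove this lemma at all but simply cites Theorem 13.3.2 of \cite{anderson2003introduction} as a standard property of the Wishart law. Your breakdown into the three assertions and your plan to quote Anderson's theorem for the eigenvalue density is precisely what is intended.
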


We are now ready to provide the proof of Lemma \ref{temp}. By Proposition (%
\ref{probCLT}), Slutsky's theorem, and the continuous mapping theorem, we
have
\begin{eqnarray*}
&&\left\langle \hat{\Sigma}_{n}\hat{A}_{n}\hat{\Sigma}_{n},\hat{A}%
_{n}\right\rangle \overset{p}{\rightarrow }\left\langle \Sigma
_{0}A_{0}\Sigma _{0},A_{0}\right\rangle , \\
&&\left\langle \sqrt{n}\left( \hat{\Sigma}_{n}-\Sigma _{0}\right) ,\hat{A}%
_{n}\right\rangle \Rightarrow \left\langle Z,A_{0}\right\rangle , \\
&&\left\langle \sqrt{n}\left( \hat{\Sigma}_{n}-\Sigma _{0}\right) ,\sqrt{n}%
\left( \hat{A}_{n}-A_{0}\right) \right\rangle \Rightarrow \left\langle
Z,Z_{A}\right\rangle .
\end{eqnarray*}
Therefore, to verify Lemma \ref{temp}, we need to show the uniform
integrability of $\left\langle \hat{\Sigma}_{n}\hat{A}_{n}\hat{\Sigma}_{n},%
\hat{A}_{n}\right\rangle ,$ $\left\langle \sqrt{n}\left( \hat{\Sigma}%
_{n}-\Sigma _{0}\right) ,\hat{A}_{n}\right\rangle $ and $\left\langle \sqrt{n%
}\left( \hat{\Sigma}_{n}-\Sigma _{0}\right) ,\sqrt{n}\left( \hat{A}%
_{n}-A_{0}\right) \right\rangle .$ In turn, it suffices to verify that for
some $r>1$ and some $n_{0}<\infty $ we have
\begin{eqnarray*}
&&\sup_{n\geq n_{0}}\mathbb{E}\left[ \left\vert \left\langle \hat{\Sigma}_{n}%
\hat{A}_{n}\hat{\Sigma}_{n},\hat{A}_{n}\right\rangle \right\vert ^{r}\right]
<\infty , \\
&&\sup_{n\geq n_{0}}\mathbb{E}\left[ \left\vert \left\langle \sqrt{n}\left(
\hat{\Sigma}_{n}-\Sigma _{0}\right) ,\hat{A}_{n}\right\rangle \right\vert
^{r}\right] <\infty , \\
&&\sup_{n\geq n_{0}}\mathbb{E}\left[ \left\vert \left\langle \sqrt{n}\left(
\hat{\Sigma}_{n}-\Sigma _{0}\right) ,\sqrt{n}\left( \hat{A}_{n}-A_{0}\right)
\right\rangle \right\vert ^{r}\right] <\infty ,
\end{eqnarray*}%
(see, for example, Chapter 5 in \cite{durrett2010probability}).

\begin{proof}[Proof of (1). ]
From Lemma \ref{Lem_Aux_UI2} we have
\begin{eqnarray*}
&&f_{w_{\left( 1\right) },...,w_{\left( d\right) }}\left(
w_{1},...,w_{d}\right) \\
&=&c_{d}\prod\limits_{i=1}^{d}\frac{\exp \left( -w_{i}/2\right) }{%
2^{n/2}\Gamma \left( \left( n-i+1\right) /2\right) }w_{i}^{\left(
n-d-1\right) /2}\prod\limits_{j>i}\left( w_{j}-w_{i}\right) \mathbb{I}\left(
0<w_{1}<...<w_{d}\right) \\
&=&c_{d}\prod\limits_{i=1}^{d}\left( \frac{\exp \left( -w_{i}/2\right) }{%
2^{n/2}\Gamma \left( \left( n-i+1\right) /2\right) }w_{i}^{\left(
n-d-1\right) /2}w_{i}^{d-i}\prod\limits_{j>i}\left( w_{j}/w_{i}-1\right)
\mathbb{I}\left( 0<w_{1}<...<w_{d}\right) \right) \\
&=&c_{d}\prod\limits_{i=1}^{d}\left( \frac{\exp \left( -w_{i}/2\right) }{%
2^{n/2}\Gamma \left( \left( n-i+1\right) /2\right) }w_{i}^{\left(
n-i+1\right) /2-1}w_{i}^{{(d-i)/2}}\prod\limits_{j>i}\left(
w_{j}/w_{i}-1\right) \mathbb{I}\left( 0<w_{1}<...<w_{d}\right) \right) \\
&=&c_{d}^{\prime }\left( \prod\limits_{i=1}^{d}f_{\chi _{n-i+1}^{2}}\left(
w_{i}\right) \right) \prod\limits_{i=1}^{d}\left( \frac{w_{i}}{n}\right) ^{{%
(d-i)/2}}\prod\limits_{j>i}\left[ n^{1/2}\left( w_{j}/w_{i}-1\right) \right]
\mathbb{I}\left( 0<w_{1}<...<w_{d}\right) ,
\end{eqnarray*}%
where $f_{\chi _{n-i+1}^{2}}\left( \cdot \right) $ denotes the density of a
chi-squared distribution with $n-i+1$ degrees of freedom and $c_{d}^{\prime
} $ is another constant also independent of $n$. The previous identity can
be interpreted as follows. Let $W^{\left( n\right) }:=(W_{\left( 1\right)
}^{\left( n\right) },...,W_{\left( d\right) }^{\left( n\right) })$ be the
eigenvalues of a $W_{d}\left( n,I\right) $ random matrix, and let $\Lambda
\left( n\right) :=\left( \Lambda _{1}\left( n\right) ,...,\Lambda _{d}\left(
n\right) \right) $ be independent random variables such that $\Lambda
_{i}\left( n\right) \sim \chi _{n-i+1}^{2}$. Then for any positive (and
measurable) function $g:\mathbb{R}^{d}\rightarrow \lbrack 0,\infty )$, we
have
\begin{eqnarray}
&&\mathbb{E}\left[ g\left( W^{\left( n\right) }\right) \right]  \notag \\
&=&c_{d}^{\prime }\mathbb{E}\left[ g\left( \Lambda \left( n\right) \right)
\prod\limits_{i=1}^{d}\left( \frac{\Lambda _{i}\left( n\right) }{n}\right) ^{%
{(d-i)/2}}\prod\limits_{j>i}\left\vert n^{1/2}\left( \Lambda _{j}\left(
n\right) /\Lambda _{i}\left( n\right) -1\right) \right\vert \mathbb{I}\left(
0<\Lambda _{1}\left( n\right) <...<\Lambda _{d}\left( n\right) \right) %
\right]  \notag \\
&\leq &c_{d}^{\prime }\mathbb{E}\left[ g\left( \Lambda \left( n\right)
\right) \prod\limits_{i=1}^{d}\left( \frac{\Lambda _{i}\left( n\right) }{n}%
\right) ^{{(d-i)/2}}\prod\limits_{j>i}\left\vert n^{1/2}\left( \Lambda
_{j}\left( n\right) /\Lambda _{i}\left( n\right) -1\right) \right\vert %
\right]  \notag \\
&\leq &c_{d}^{\prime }\sqrt{\mathbb{E}\left[ g\left( \Lambda \left( n\right)
\right) ^{2}\right] }\sqrt{\mathbb{E}\left[ \left(
\prod\limits_{i=1}^{d}\left( \frac{\Lambda _{i}\left( n\right) }{n}\right) ^{%
{(d-i)/2}}\prod\limits_{j>i}\left\vert n^{1/2}\left( \Lambda _{j}\left(
n\right) /\Lambda _{i}\left( n\right) -1\right) \right\vert \right) ^{2}%
\right] }  \notag \\
&\leq &c_{d}^{\prime }\sqrt{\mathbb{E}\left[ g\left( \Lambda \left( n\right)
\right) ^{2}\right] }\mathbb{E}^{1/4}\left( \prod\limits_{i=1}^{d}\left(
\frac{\Lambda _{i}\left( n\right) }{n}\right) ^{2{(d-i)}}\right) \mathbb{E}%
^{1/4}\left( \prod\limits_{j>i}\left\vert n^{1/2}\left( \Lambda _{j}\left(
n\right) /\Lambda _{i}\left( n\right) -1\right) \right\vert ^{4}\right) ,
\label{Id_A}
\end{eqnarray}

where the last two inequalities are obtained by the Cauchy-Schwarz
inequality. We will show $\sup_{n\geq n_{0}}\mathbb{E}\left[ \left\vert
\left\langle \hat{\Sigma}_{n}\hat{A}_{n}\hat{\Sigma}_{n},\hat{A}%
_{n}\right\rangle \right\vert ^{r}\right] <\infty $ to verify the first
statement of Lemma \ref{temp}. Note that we can simplify $\left\vert
\left\langle \hat{\Sigma}_{n}\hat{A}_{n}\hat{\Sigma}_{n},\hat{A}%
_{n}\right\rangle \right\vert ^{r}$ as
\begin{equation*}
\left\vert \left\langle \hat{\Sigma}_{n}\hat{A}_{n}\hat{\Sigma}_{n},\hat{A}%
_{n}\right\rangle \right\vert ^{r}=\left\vert \text{tr}\left( \hat{A}_{n}%
\hat{\Sigma}_{n}\hat{A}_{n}\hat{\Sigma}_{n}\right) \right\vert ^{r}=\frac{%
4^{r}}{\text{tr}\left( \hat{\Sigma}_{n}^{-1}\right) ^{r}}\mathrm{tr}\left( \hat{\Sigma%
}_{n}^{-2}\right) ^{r}.
\end{equation*}%
By our definition, we have%
\begin{equation*}
\hat{\Sigma}_{n}^{-1}=\left( C^{T}\right) ^{-1}\left( W/n\right) ^{-1}C^{-1},
\end{equation*}%
and thus there exist numerical constants \underline{$c$}$_{1},\bar{c}_{1}>0$
such that
\begin{equation*}
\underline{c}_{1}\text{tr}\left( \left( W/n\right) ^{-1}\right) \leq \text{tr%
}\left( \hat{\Sigma}_{n}^{-1}\right) \leq \bar{c}_{1}\text{tr}\left( \left(
W/n\right) ^{-1}\right) .
\end{equation*}%
Similarly, there exist numerical constants \underline{$c$}$_{2},\bar{c}%
_{2}>0 $ such that%
\begin{equation*}
\underline{c}_{2}\text{tr}\left( \left( W/n\right) ^{-2}\right) \leq \text{tr%
}\left( \hat{\Sigma}_{n}^{-2}\right) \leq \bar{c}_{2}\text{tr}\left( \left(
W/n\right) ^{-2}\right) .
\end{equation*}%
After using the Cauchy-Schwarz inequality again, we have
\begin{eqnarray}
\mathbb{E}\left\vert \left\langle \hat{\Sigma}_{n}\hat{A}_{n}\hat{\Sigma}%
_{n},\hat{A}_{n}\right\rangle \right\vert ^{r} &\leq &c\mathbb{E}\left[
\sum_{i=1}^{d}\left( \frac{W_{\left( i\right) }^{\left( n\right) }}{n}%
\right) ^{r}\sum_{i=1}^{d}\left( \frac{n}{W_{\left( i\right) }^{\left(
n\right) }}\right) ^{2r}\right]  \notag \\
&\leq &c\sqrt{\mathbb{E}\left[ \sum_{i=1}^{d}\left( \frac{W_{\left( i\right)
}^{\left( n\right) }}{n}\right) ^{2r}\right] }\sqrt{\mathbb{E}\left[
\sum_{i=1}^{d}\left( \frac{n}{W_{\left( i\right) }^{\left( n\right) }}%
\right) ^{4r}\right] },  \label{g_func}
\end{eqnarray}%
where $c$ is a numerical constant$.$ Therefore, it suffices to show that for
any $r>1$ there exists $n_{0}$ such that%
\begin{equation}
\sup_{n\geq n_{0}}\left( \mathbb{E}\left[ \left( \frac{\Lambda _{j}\left(
n\right) }{n}\right) ^{r}\right] \cdot \mathbb{E}\left[ \left( \frac{n}{%
\Lambda _{i}\left( n\right) }\right) ^{r}\right] \cdot \mathbb{E}\left[
\left\vert n^{1/2}\left( \Lambda _{j}\left( n\right) /\Lambda _{i}\left(
n\right) -1\right) \right\vert ^{r}\right] \right) <\infty .  \label{AU0}
\end{equation}%
We know that $\Lambda _{i}\left( n\right) /n$ follows the gamma distribution
with shape parameter $\alpha =\left( n-i+1\right) /2$ and scale parameter $%
\lambda =n/2$. Write $Y_{n}\sim Gamma\left( \alpha ,\lambda \right) $ and
note that
\begin{eqnarray}
\mathbb{E}\left( \frac{1}{Y_{n}^{r}}\right) &=&\int_{0}^{\infty }\frac{1}{%
y^{r}}\frac{\exp \left( -\lambda y\right) \lambda ^{\alpha }y^{\alpha -1}}{%
\Gamma \left( \alpha \right) }\mathrm{d}y \notag \\
&=&\int_{0}^{\infty }\frac{\Gamma \left( \alpha -t\right) \lambda ^{r}\exp
\left( -\lambda y\right) \lambda ^{\alpha -r}y^{\alpha -r-1}}{\Gamma \left(
\alpha \right) \Gamma \left( \alpha -r\right) }\mathrm{d}y  \notag \\
&=&\frac{\Gamma \left( \alpha -r\right) \lambda ^{r}}{\Gamma \left( \alpha
\right) }.  \label{AU2}
\end{eqnarray}%
It follows from standard properties of the gamma function that lim$%
_{n\rightarrow \infty }\Gamma \left( \alpha -r\right) \lambda ^{r}/\Gamma
\left( \alpha \right) =1$ (see, for example, Chapter 3 in \cite%
{hogg1978introduction}). After applying exactly the same approach to $%
\mathbb{E}\left[ \left( \Lambda _{i}\left( n\right) /n\right) ^{r}\right] $,
we have
\begin{equation}
\mathbb{E}\left[ \left( \Lambda _{i}\left( n\right) /n\right) ^{r}\right] =%
\frac{\Gamma \left( \alpha +r\right) }{\Gamma \left( \alpha \right) \lambda
^{r}}\rightarrow 1,
\label{AU22}
\end{equation}%
as $n\rightarrow \infty .$

Now, we only need to show the third term in (\ref{AU0}) is finite. Note that
\begin{eqnarray}
&&\mathbb{E}\left[ \left\vert n^{1/2}\left( \Lambda _{j}\left( n\right)
/\Lambda _{i}\left( n\right) -1\right) \right\vert ^{r}\right]  \label{AU2C}
\\
&=&\mathbb{E}\left[ \left\vert n^{1/2}\left( \Lambda _{j}\left( n\right)
/\Lambda _{i}\left( n\right) -1\right) \right\vert ^{r}\mathbb{I}\left(
\left\vert \Lambda _{j}\left( n\right) /n-1\right\vert \leq \varepsilon
,\left\vert \Lambda _{i}\left( n\right) /n-1\right\vert \leq \varepsilon
\right) \right]  \notag \\
&&+\mathbb{E}\left[ \left\vert n^{1/2}\left( \Lambda _{j}\left( n\right)
/\Lambda _{i}\left( n\right) -1\right) \right\vert ^{r}\mathbb{I}\left(
\left\vert \Lambda _{j}\left( n\right) /n-1\right\vert >\varepsilon \cup
\left\vert \Lambda _{i}\left( n\right) /n-1\right\vert >\varepsilon \right) %
\right] .  \notag
\end{eqnarray}%
It is straightforward to verify (for example by computing moment generating
functions of the Gamma distribution) that
\begin{equation}
\sup_{n\geq 1}\mathbb{E}\left( n^{r/2}\left\vert \frac{\Lambda _{j}\left(
n\right) }{n}-1\right\vert ^{r}\right) <\infty  \label{B1aa}
\end{equation}%
for any $r>0$ and further, we can conclude that
\begin{eqnarray*}
&&\sup_{n\geq 1}\mathbb{E}\left[ \left\vert n^{1/2}\left( \Lambda _{j}\left(
n\right) /\Lambda _{i}\left( n\right) -1\right) \right\vert ^{r}\mathbb{I}%
\left( \left\vert \Lambda _{j}\left( n\right) /n-1\right\vert \leq
\varepsilon ,\left\vert \Lambda _{i}\left( n\right) /n-1\right\vert \leq
\varepsilon \right) \right] \\
&\leq &\sup_{n\geq 1}\mathbb{E}\left[ \left\vert n^{1/2}\left( \left( \left(
\frac{\Lambda _{j}\left( n\right) }{n}-1\right) -\left( \frac{\Lambda
_{i}\left( n\right) }{n}-1\right) \right) /\left( 1-\epsilon \right) \right)
\right\vert ^{r}\mathbb{I}\left( \left\vert \Lambda _{j}\left( n\right)
/n-1\right\vert \leq \varepsilon ,\left\vert \Lambda _{i}\left( n\right)
/n-1\right\vert \leq \varepsilon \right) \right] \\
&\leq &\frac{2^{r-1}}{\left( 1-\varepsilon \right) ^{r}}\sup_{n\geq 1}%
\mathbb{E}\left( n^{r/2}\left\vert \frac{\Lambda _{j}\left( n\right) }{n}%
-1\right\vert ^{r}\right) <\infty .
\end{eqnarray*}%
Then, because $\Lambda _{j}\left( n\right) /n$ (being the sum of $n-j+1$
i.i.d. random variables with finite moment generating function) satisfies
the large deviations principle (see, for instance, Chapter 2.2 in \cite%
{largedeviation}), we have
\begin{eqnarray}
&&\mathbb{E}\left[ \left\vert n^{1/2}\left( \Lambda _{j}\left( n\right)
/\Lambda _{i}\left( n\right) -1\right) \right\vert ^{r}\mathbb{I}\left(
\left\vert \Lambda _{j}\left( n\right) /n-1\right\vert >\varepsilon \right) %
\right]  \notag \\
&\leq &\sqrt{\mathbb{E}\left[ \left\vert n^{1/2}\left( \Lambda _{j}\left(
n\right) /\Lambda _{i}\left( n\right) -1\right) \right\vert ^{2r}\right] }%
\sqrt{\mathbb{P}\left( \left\vert \Lambda _{j}\left( n\right)
/n-1\right\vert >\varepsilon \right) }.  \label{AU3a}
\end{eqnarray}%
Because of our discussion involving the finiteness of the first two factors
in (\ref{AU0}), we can conclude that when $n>d+8r$,
\begin{eqnarray*}
\sqrt{\mathbb{E}\left[ \left\vert n^{1/2}\left( \Lambda _{j}\left( n\right)
/\Lambda _{i}\left( n\right) -1\right) \right\vert ^{2r}\right] } &\leq
&n^{r}\sqrt{1+\mathbb{E}\left[ \left( \frac{\Lambda _{j}\left( n\right) }{n}%
\times \frac{n}{\Lambda _{i}\left( n\right) }\right) ^{2r}\right] } \\
&\leq &n^{r}\sqrt{1+\sqrt{\mathbb{E}\left( \left( \Lambda _{j}\left(
n\right) /n\right) ^{4r}\right) }\times \sqrt{\mathbb{E}\left( \left(
n/\Lambda _{i}\left( n\right) \right) ^{4r}\right) }} \\
&\leq &C_r^{\prime }n^{r}.
\end{eqnarray*}%
Notice that from \ref{AU2} and \ref{AU22}, we have
\begin{equation*}
\sup_{n > d+8r} \mathbb{E}\left( \left( \Lambda _{j}\left(
n\right) /n\right) ^{4r}\right) \times \mathbb{E}\left( \left(
n/\Lambda _{i}\left( n\right) \right) ^{4r}\right)< \infty,
\end{equation*}
 which means $C_r^{\prime }$ is a numerical constant independent with $n$. Therefore, the first term in the
right hand side of (\ref{AU3a}) grows at rate $O\left( n^{r}\right) $, which
is polynomial, whereas the second term, due to the large deviations
principle invoked earlier converges exponentially fast to zero for each $%
\varepsilon >0$. This completes the first part of Lemma \ref{temp}.
\end{proof}

\begin{proof}[Proof of (2).]
For the second part of Lemma \ref{temp}, note that Lemma \ref{trace_ineqn}
implies%
\begin{equation*}
\left\vert \left\langle \sqrt{n}\left( \hat{\Sigma}_{n}-\Sigma _{0}\right) ,%
\hat{A}_{n}\right\rangle \right\vert ^{2}\leq \left\Vert \sqrt{n}\left( \hat{%
\Sigma}_{n}-\Sigma _{0}\right) \right\Vert _{F}^{2}\left\Vert \hat{A}%
_{n}\right\Vert _{F}^{2}.
\end{equation*}%
Then, for the uniform integrability of $\left\Vert \sqrt{n}\left( \hat{\Sigma%
}_{n}-\Sigma _{0}\right) \right\Vert _{F}^{2},$ we have
\begin{equation*}
\left\Vert \sqrt{n}\left( \hat{\Sigma}_{n}-\Sigma _{0}\right) \right\Vert
_{F}^{4}=\left\Vert C\sqrt{n}\left( \frac{1}{n}\sum_{i=1}^{n}\zeta
_{i}^{T}\zeta _{i}-I\right) C^{T}\right\Vert _{F}^{4}\leq \left\Vert \sqrt{n}%
\left( \frac{1}{n}\sum_{i=1}^{n}\zeta _{i}^{T}\zeta _{i}-I\right)
\right\Vert _{F}^{4}\left\Vert C\right\Vert _{F}^{8},
\end{equation*}%
by Lemma \ref{Lem_Aux_UI1}. We denote $\hat{\Psi}^{(n)}=\frac{1}{n}%
\sum_{i=1}^{n}\zeta _{i}^{T}\zeta _{i}.$ And by the Cauchy-Schwarz
inequality, we have%
\begin{equation*}
\left\Vert \sqrt{n}\left( \frac{1}{n}\sum_{i=1}^{n}\zeta _{i}^{T}\zeta
_{i}-I\right) \right\Vert _{F}^{4}=\left( \sum_{i,j}\left( \sqrt{n}\left(
\hat{\Psi}_{ij}^{(n)}-\delta _{ij}\right) \right) ^{2}\right) ^{2}\leq
d^{2}\sum_{i,j}\left( \sqrt{n}\left( \hat{\Psi}_{ij}^{(n)}-\delta
_{ij}\right) \right) ^{4},
\end{equation*}%
where $\delta _{ij}=\mathbb{I}\{i=j\}.$ Note that%
\begin{eqnarray*}
&&\mathbb{E}\left[ \sum_{i,j}\left( \sqrt{n}\left( \hat{\Psi}%
_{ij}^{(n)}-\delta _{ij}\right) \right) ^{4}\right] \\
&=&\mathbb{E}\left[ \sum_{i=1}^{d}\left( \sqrt{n}\left( \left( \frac{1}{n}%
\sum_{k=1}^{n}z_{ik}^{2}\right) -1\right) \right) ^{4}\right] +2\mathbb{E}%
\left[ \sum_{i=1}^{d}\sum_{j=1+1}^{d}\left( \sqrt{n}\left( \frac{1}{n}%
\sum_{k=1}^{n}z_{ik}z_{jk}\right) \right) ^{4}\right] \\
&=&d\mathbb{E}\left[ \left( \frac{1}{\sqrt{n}}\sum_{k=1}^{n}\left(
z_{ik}^{2}-1\right) \right) ^{4}\right] +d(d-1)\mathbb{E}\left[ \left( \frac{%
1}{\sqrt{n}}\sum_{k=1}^{n}z_{ik}z_{jk}\right) ^{4}\right],
\end{eqnarray*}%
where $z_{ik}\sim N(0,1)$ are i.i.d random variables. Further, direct
calculations give us
\begin{eqnarray*}
\mathbb{E}\left[ \left( \frac{1}{\sqrt{n}}\sum_{k=1}^{n}\left(
z_{ik}^{2}-1\right) \right) ^{4}\right] &=&\frac{1}{n^{2}}\left( \mathbb{E}%
\left[ \sum_{k=1}^{n}\left( z_{ik}^{2}-1\right) ^{4}\right] +6\frac{1}{n^{2}}%
\mathbb{E}\left[ \sum_{k=1}^{n}\sum_{l=k+1}^{n}\left( z_{ik}^{2}-1\right)
^{2}\left( z_{il}^{2}-1\right) ^{2}\right] \right) \\
&=&\frac{60}{n}+\frac{12(n-1)}{n}<\infty ,
\end{eqnarray*}%
and%
\begin{equation*}
\mathbb{E}\left[ \left( \frac{1}{\sqrt{n}}\sum_{k=1}^{n}z_{ik}z_{jk}\right)
^{4}\right] =\frac{1}{n^{2}}\mathbb{E}\left[
\sum_{k=1}^{n}z_{ik}^{4}z_{jk}^{4}\right] +6\frac{1}{n^{2}}\mathbb{E}\left[
\sum_{k=1}^{n}\sum_{l=k+1}^{n}z_{ik}^{2}z_{jk}^{2}z_{il}^{2}z_{jl}^{2}\right]
=\frac{9}{n}+\frac{3\left( n-1\right) }{n}<\infty .
\end{equation*}%
Therefore, we complete the uniform integrability of $\left\Vert \sqrt{n}%
\left( \hat{\Sigma}_{n}-\Sigma _{0}\right) \right\Vert _{F}^{2}.$

For $\left\Vert \hat{A}_{n}\right\Vert _{F}^{2},$ using the similar argument
with (\ref{g_func}), we have
\begin{equation}
\left\Vert \hat{A}_{n}\right\Vert _{F}^{2r}=\left\vert \text{tr}(\hat{A}%
_{n}^{2})\right\vert ^{r}=\frac{4^{r}\mathrm{tr}\left( \hat{\Sigma}_{n}^{-4}\right)
^{r}}{\text{tr}\left( \hat{\Sigma}_{n}^{-1}\right) ^{r}}\leq c_{2}\sqrt{%
\mathbb{E}\left[ \sum_{i=1}^{d}\left( \frac{W_{\left( i\right) }^{\left(
n\right) }}{n}\right) ^{2r}\right] }\sqrt{\mathbb{E}\left[
\sum_{i=1}^{d}\left( \frac{n}{W_{\left( i\right) }^{\left( n\right) }}%
\right) ^{8r}\right] }.  \label{A_n_bd}
\end{equation}
From the earlier bounds leading to the analysis of (\ref{AU0}), we complete
the uniform integrability of $\left\Vert \hat{A}_{n}\right\Vert _{F}^{2}$.
Hence, the second part of Lemma \ref{temp} follows.
\end{proof}

\begin{proof}[Proof of (3).]
For the third part of Lemma \ref{temp}, recall in the proof of Proposition \protect\ref{probCLT}(3), $f(\Sigma )=-\frac{2}{\sqrt{\text{tr}(\Sigma ^{-1})}}\Sigma ^{-2}$ and thus $\hat{A}_n=f\left(\hat{\Sigma}_n \right)$.

The argument is similar to that given to establish (\ref{AU2C}). We have
argued that $f\left( \cdot \right) $ is smooth around $\Sigma _{0}$, which
was the basis for the use of the delta method earlier in our argument.
Moreover, note that $\hat{\Sigma}_{n}$ satisfies a large deviations
principle. Therefore%
\begin{eqnarray*}
&&\left\vert \left\langle \sqrt{n}\left( \hat{\Sigma}_{n}-\Sigma _{0}\right)
,\sqrt{n}\left( \hat{A}_{n}-A_{0}\right) \right\rangle \right\vert \\
&=&\left\vert \left\langle \sqrt{n}\left( \hat{\Sigma}_{n}-\Sigma
_{0}\right) ,\sqrt{n}\left( f\left( \hat{\Sigma}_{n}\right) -A_{0}\right)
\right\rangle \right\vert \\
&=&\left\vert \left\langle \sqrt{n}\left( \hat{\Sigma}_{n}-\Sigma
_{0}\right) ,\sqrt{n}\left( f\left( \hat{\Sigma}_{n}\right) -A_{0}\right)
\right\rangle \right\vert \mathbb{I}\left( \left\Vert \hat{\Sigma}%
_{n}-\Sigma _{0}\right\Vert _{F}\leq \varepsilon \right) \\
&&+\left\vert \left\langle \sqrt{n}\left( \hat{\Sigma}_{n}-\Sigma
_{0}\right) ,\sqrt{n}\left( f\left( \hat{\Sigma}_{n}\right) -A_{0}\right)
\right\rangle \right\vert \mathbb{I}\left( \left\Vert \hat{\Sigma}%
_{n}-\Sigma _{0}\right\Vert _{F}>\varepsilon \right) .
\end{eqnarray*}%
By applying Lemma \ref{trace_ineqn} and the fact that $Df\left( \cdot
\right) $ is continuous around $\Sigma _{0}$ (see the expression of $%
Df\left( \cdot \right) $ in (\ref{Df})) we conclude that
\begin{eqnarray*}
&&\left\vert \left\langle \sqrt{n}\left( \hat{\Sigma}_{n}-\Sigma _{0}\right)
,\sqrt{n}\left( f\left( \hat{\Sigma}_{n}\right) -A_{0}\right) \right\rangle
\right\vert \mathbb{I}\left( \left\Vert \hat{\Sigma}_{n}-\Sigma
_{0}\right\Vert _{F}\leq \varepsilon \right) \\
&\leq &\sup_{\Sigma :\left\Vert \Sigma _{0}-\Sigma \right\Vert _{F}\leq
\varepsilon }\left\vert \left\langle \sqrt{n}\left( \hat{\Sigma}_{n}-\Sigma
_{0}\right) , Df\left( \Sigma \right) \left( \sqrt{n}\left( \hat{\Sigma%
}_{n}-\Sigma _{0}\right) \right)  \right\rangle \right\vert \\
&\leq &c_{0}\left\Vert \sqrt{n}\left( \hat{\Sigma}_{n}-\Sigma _{0}\right)
\right\Vert _{F}^{2},
\end{eqnarray*}%
where $c_{0}=\sup_{\Sigma :\left\Vert \Sigma _{0}-\Sigma \right\Vert
_{F}\leq \varepsilon }\left\Vert Df\left( \Sigma \right) \right\Vert
_{op}$ and $\left\Vert Df\left( \Sigma \right) \right\Vert
_{op}$ is the operator norm, which is defined by
\begin{equation*}
\left\Vert Df\left( \Sigma \right) \right\Vert
_{op}:=\sup\left\{ \left\Vert Df\left( \Sigma \right) A \right\Vert_F: A \in \mathbb{R}^{d\times d} \text{ with }\Vert A \Vert_F = 1    \right\}.
\end{equation*} Since $Df\left( \cdot
\right) $ is continuous around $\Sigma _{0}$, there exists sufficient small $\epsilon$ such that $c_0$ is finite. $\left\Vert \sqrt{n}\left( \hat{\Sigma}_{n}-\Sigma _{0}\right)
\right\Vert _{F}^{2}$ is proved to be uniformly integrable in the
second part of Lemma \ref{temp}. On the other hand, we have for $r>1$
\begin{eqnarray*}
&&\mathbb{E}\left[ \left\vert \left\langle \sqrt{n}\left( \hat{\Sigma}%
_{n}-\Sigma _{0}\right) ,\sqrt{n}\left( f\left( \hat{\Sigma}_{n}\right)
-A_{0}\right) \right\rangle \right\vert ^{r}\mathbb{I}\left( \left\Vert \hat{%
\Sigma}_{n}-\Sigma _{0}\right\Vert _{F}>\varepsilon \right) \right] \\
&\leq &\mathbb{E}\left[ \left\Vert \sqrt{n}\left( \hat{\Sigma}_{n}-\Sigma
_{0}\right) \right\Vert _{F}^{r}\left\Vert \sqrt{n}\left( f\left( \hat{\Sigma%
}_{n}\right) -A_{0}\right) \right\Vert _{F}^{r}\mathbb{I}\left( \left\Vert
\hat{\Sigma}_{n}-\Sigma _{0}\right\Vert _{F}>\varepsilon \right) \right] \\
&\leq &\sqrt{\mathbb{E}\left( \left\Vert \sqrt{n}\left( \hat{\Sigma}%
_{n}-\Sigma _{0}\right) \right\Vert _{F}^{2r}\right) }\mathbb{E}^{1/4}\left(
\left\Vert \sqrt{n}\left( f\left( \hat{\Sigma}_{n}\right) -A_{0}\right)
\right\Vert _{F}^{4r}\right) \left( \mathbb{P}\left( \left\Vert \hat{\Sigma}%
_{n}-\Sigma _{0}\right\Vert _{F}>\varepsilon \right) \right) ^{1/4}.
\end{eqnarray*}%
The proof of the second part of Lemma \ref{temp} shows $\mathbb{E}\left(
\left\Vert \sqrt{n}\left( \hat{\Sigma}_{n}-\Sigma _{0}\right) \right\Vert
_{F}^{2r}\right) <\infty$ when $r\leq 2.$ Further, we have argued throughout
the proof of the first part of Lemma \ref{temp} and the proof leading to (%
\ref{A_n_bd}) that%
\begin{equation*}
\mathbb{E}\left( \left\Vert \sqrt{n}\left( f\left( \hat{\Sigma}_{n}\right)
-A_{0}\right) \right\Vert _{F}^{4r}\right) \leq c_{3}n^{2r}\left( \mathbb{E}%
\left( \left\Vert \hat{A}_{n}\right\Vert ^{4r}\right) +\left\Vert
A_{0}\right\Vert ^{4r}\right) \leq O\left( n^{2r}\right) ,
\end{equation*}%
where $c_{3}$ is a numerical constant only related to $r$ and $d$. However,
the large deviations principle gives us $\mathbb{P}\left( \left\Vert \hat{%
\Sigma}_{n}-\Sigma _{0}\right\Vert _{F}>\varepsilon \right) =O\left( \exp
\left( -cn\right) \right) $ for some $c>0$. Therefore, using Lemmas \ref%
{trace_ineqn} and \ref{Lem_Aux_UI1} and the previous estimates we can
complete the last part of Lemma \ref{temp}.
\end{proof}

\subsection{Proof of Proposition \protect\ref{consistency}}

\begin{proof}
Let $X^{\ast }(\rho ):=\arg \min_{X\succ 0}\left\{ -\log \det X+\sup_{%
\mathbb{Q}\in \mathcal{P}_{\rho }^{\ast }}\mathbb{E}^{\mathbb{Q}}\left[
\left\langle \xi \xi ^{T},X\right\rangle \right] \right\} ,$ where
\begin{equation*}
\mathcal{P}_{\rho }^{\ast }=\left\{ \mathbb{Q}\sim \mathcal{N}\left(
0,\Sigma \right) \text{ for some }\Sigma :\mathbb{W}_{2}(\mathcal{N}\left(
0,\Sigma _{0}\right) ,\mathbb{Q})\leq \rho \right\} .
\end{equation*}%
Then, $\arg \min_{\rho \geq 0}\{\mathbb{E}[L(X^{\ast }(\rho ),\Sigma
_{0})]\}=0.$ Since $X_{n}^{\ast }(\rho )$ is a continuous function of $\hat{%
\Sigma}_{n},$ we have $X_{n}^{\ast }(\rho )\rightarrow $ $X^{\ast }(\rho )$
almost surely for all $\rho \geq 0$ by the continuous mapping theorem.
Furthermore, proof of Lemma \ref{temp} gives us $\mathbb{E}X_{n}^{\ast }(0)=%
\mathbb{E}\hat{\Sigma}_{n}^{-1}\rightarrow \Sigma _{0}^{-1}.$ And from
Lemmas 1 and 2 in \cite{cai2015law}, we have
\begin{equation*}
\left\vert \log \det (\hat{\Sigma}_{n}^{-1})-\log \det (\Sigma
_{0}^{-1})\right\vert =\left\vert \sum_{k=1}^{d}\log \left( \frac{1}{n}\chi
_{n-k-1}^{2}\right) \right\vert \rightarrow 0,
\end{equation*}%
where $\chi _{n}^{2},\ldots ,\chi _{n-p-1}^{2}$ are mutually independent $%
\chi ^{2}$ distribution with the degree of freedom $n,\ldots ,n-p+1$
respectively. Due to the uniform integrability of $\log \left(\frac{1}{n}\chi
_{n-k-1}^{2}\right)$, we conclude $\mathbb{E}\left[ \log \det \left( X_{n}^{\ast
}(0)\right) \right] \rightarrow \mathbb{E}\left[ \log \det \left( X^{\ast
}(0)\right) \right] .$

By (\ref{gamma_eqn}), (\ref{large_gamma_inequ}) and (\ref{x_eqn}), we have $%
x_{i}^{\ast }\leq \gamma ^{\ast }\leq d/\rho ^{2}$ and
\begin{eqnarray*}
\hat{x}_{i}^{\ast } &=&\gamma ^{\ast }\left[ 1-\frac{1}{2}\left( \sqrt{\hat{%
\lambda}_{i}^{2}\left( \gamma ^{\ast }\right) ^{2}+4\hat{\lambda}_{i}\gamma
^{\ast }}-\hat{\lambda}_{i}\gamma ^{\ast }\right) \right] \\
&\geq &\frac{1}{\rho ^{2}}\left( \sum_{i=1}^{d}\frac{1}{\hat{\lambda}%
_{i}\gamma ^{\ast }+2}\right) \frac{1}{\hat{\lambda}_{i}\gamma ^{\ast }+2} \\
&\geq &\frac{1}{\rho ^{2}}\left( \frac{1}{\hat{\lambda}_{i}\gamma ^{\ast }+2}%
\right) ^{2}.
\end{eqnarray*}%
Therefore, we have%
\begin{eqnarray*}
L(X_{n}^{\ast }(\rho ),\Sigma _{0}) &=&-\log \det (X_{n}^{\ast }(\rho
)\Sigma _{0})+\left\langle X_{n}^{\ast }(\rho ),\Sigma _{0}\right\rangle -d
\\
&=&-\log \det (X_{n}^{\ast }(\rho ))+\left\langle X_{n}^{\ast }(\rho
),\Sigma _{0}\right\rangle -\log \det (\Sigma _{0})-d \\
&\geq &d\log \left( \rho ^{2}/d\right) -\log \det (\Sigma _{0})-d.
\end{eqnarray*}%
Thus, $\rho _{n}\in \lbrack 0,C]$, for large enough $n$ and a large enough
constant $C.$ By proposition 3.5 in \cite{nguyen2018distributionally}, we
have $\log \det (X_{n}^{\ast }(\rho ))$ and $\left\langle X_{n}^{\ast }(\rho
),\Sigma _{0}\right\rangle $ decrease with $\rho .$ Then, $\mathbb{E}%
\left\langle X_{n}^{\ast }(\rho ),\Sigma _{0}\right\rangle \rightarrow
\mathbb{E}\left\langle X^{\ast }(\rho ),\Sigma _{0}\right\rangle $ since $%
\mathbf{0}\preceq X_{n}^{\ast }(\rho )\preceq X_{n}^{\ast }(0)=\hat{\Sigma}%
_{n}^{-1}$ and $\hat{\Sigma}_{n}^{-1}$ is uniformly integrable$.$ For $\log
\det (X_{n}^{\ast }(\rho )),$ the upper bound is given by
\begin{equation*}
\log \det (X_{n}^{\ast }(\rho ))\leq \log \det (\hat{\Sigma}_{n}^{-1}),
\end{equation*}%
and the lower bound is given by
\begin{eqnarray*}
\log \det (X_{n}^{\ast }(\rho )) &=&\log \left( \prod\limits_{i=1}^{d}\hat{x}%
_{i}^{\ast }\right) \\
&\geq &\log \left( \prod\limits_{i=1}^{d}\frac{1}{\rho ^{2}}\left( \frac{1}{%
\hat{\lambda}_{i}\gamma ^{\ast }+2}\right) ^{2}\right) \\
&\geq &-2d\log \rho -2\sum_{i=1}^{d}\left( \log \left( \hat{\lambda}%
_{i}\left( d/\rho ^{2}\right) +2\right) \right) \\
&\geq &-2d\log \rho -2\log \det \left( \left( d/\rho ^{2}\right) \hat{\Sigma}%
_{n}+2I\right) .
\end{eqnarray*}%
Due to the uniform integrability of $\log \det \left( \left( d/\rho
^{2}\right) \hat{\Sigma}_{n}+2I\right) $ and $\log \det (\hat{\Sigma}%
_{n}^{-1}),$ we have $\mathbb{E}\left[ \log \det (X_{n}^{\ast }(\rho ))%
\right] \rightarrow \mathbb{E}\left[ \log \det (X^{\ast }(\rho ))\right] .$
Finally, by the monotonicity of $\mathbb{E}\left[ \log \det (X_{n}^{\ast
}(\rho ))\right] $ and $\mathbb{E}\left\langle X_{n}^{\ast }(\rho ),\Sigma
_{0}\right\rangle ,$ we have $\mathbb{E}\left[ L(X_{n}^{\ast }(\rho ),\Sigma
_{0})\right] $ converges uniformly; thus, $\rho _{n}\rightarrow \arg
\min_{\rho \geq 0}\{\mathbb{E}[L(X^{\ast }(\rho ),\Sigma _{0})]\}=0.$
\end{proof}

\end{document}